\newtheorem{theorem}{Theorem}
\newtheorem{lemma}{Lemma}
\newtheorem{conjecture}{Conjecture}
\begin{document}

\title{Major index over descent for pattern-avoiding permutations}
\author{William J. Keith}
\keywords{permutations; pattern avoidance; unimodality; major index; descent; standard Young tableaux; Robinson-Schensted; Frame-Robinson-Thrall; Stanley hook formula}
\subjclass[2010]{05A17, 11P83}
\maketitle

\begin{abstract}

An open conjecture in pattern avoidance theory is that the distribution of the major index among 321-avoiding permutations is distributed unimodally.  We construct a formula for this distribution, and in the case of 2 descents prove unimodality, with unimodality for 3 through 5 descents likely being little more complicated.  The formula refines the $q$-analogue of the Frame-Robinson-Thrall hooklength formula for two-rowed partitions, and in the latter part of the paper we discuss another theorem of the same type, and further exploration toward this question.  We also give observations on the analogous behaviors for other permutation patterns of length 3. 

\end{abstract}

\section{Introduction}

In \cite{SagSav}, Bruce Sagan and Carla Savage introduced $st$-Wilf equivalence, a refinement of Wilf equivalence for pattern avoidance classes.  (Background terms are completely defined in the next section.) An outgrowth of the study of these equivalances was interest in the study of the bivariate generating functions $$\sum_{\sigma \in {\mathcal{S}_n(\tau)}} q^{\text{sta } \sigma}t^{\text{stb } \sigma}$$ for various pairs (sta, stb) of statistics.  In this article we mainly focus on an open conjecture, first made in \cite{CEKS}, concerning two of the most widely studied statistics, the major index and the descent number.  This states (Conjecture 4.2 in \cite{CEKS}): 

\begin{conjecture} For all $n$, $i$, if $\sum_{\sigma \in {\mathcal{S}_n(321)}} q^{\text{maj } \sigma} t^{\text{des } \sigma} = \sum_{i=0}^{\lfloor n/2 \rfloor} A_{n,i}(q) t^{i}$, the polynomials $A_{n,i} (q)$ are unimodal.
\end{conjecture}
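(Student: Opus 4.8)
The plan is to pass from permutations to standard Young tableaux via Robinson--Schensted, exploit a descent-set symmetry to pin all summands to a common center, and thereby reduce the unimodality of $A_{n,i}(q)$ to that of its two-row slices. First I would invoke Robinson--Schensted: a permutation $\sigma$ avoids $321$ exactly when its insertion and recording tableaux $(P,Q)$ have at most two rows, and the correspondence satisfies $\mathrm{Des}(\sigma)=\mathrm{Des}(Q)$, so that $\mathrm{maj}\,\sigma=\mathrm{maj}\,Q$ and $\mathrm{des}\,\sigma=\mathrm{des}\,Q$. Since the insertion tableau $P$ ranges freely over the $f^{\lambda}$ tableaux of each fixed shape $\lambda=(n-k,k)$, this yields
\[
A_{n,i}(q)\;=\;\sum_{k=0}^{\lfloor n/2\rfloor} f^{(n-k,k)}\,F_{(n-k,k),\,i}(q),
\qquad
F_{\lambda,i}(q):=\sum_{\substack{Q\in\mathrm{SYT}(\lambda)\\ \mathrm{des}\,Q=i}} q^{\mathrm{maj}\,Q},
\]
which is precisely the announced refinement of the $q$-Frame--Robinson--Thrall formula, as summing $F_{\lambda,i}$ over $i$ recovers Stanley's $q$-hook expression for two-rowed shapes.

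The decisive structural step is symmetry. Applying the Sch\"utzenberger evacuation involution to $\mathrm{SYT}(\lambda)$ preserves the shape and complements the descent set, $\mathrm{Des}(Q^{\mathrm{evac}})=\{\,n-j:j\in\mathrm{Des}(Q)\,\}$; hence it fixes $\mathrm{des}\,Q=i$ and sends $\mathrm{maj}\,Q\mapsto in-\mathrm{maj}\,Q$. Therefore each $F_{(n-k,k),\,i}(q)$ is palindromic about $in/2$, \emph{independently of $k$}. Consequently $A_{n,i}(q)$ is palindromic about $in/2$, and — this is the gain — because all the summands share a single center, a positive sum of palindromic unimodal polynomials about a common center is again unimodal. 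Thus it suffices to prove that each two-row slice $F_{(n-k,k),\,i}(q)$ is unimodal.

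For the case $i=2$ I would then make $F_{(n-k,k),2}(q)$ completely explicit. Coding a two-row tableau by the word $w\in\{1,2\}^{n}$ recording the row of each entry, a descent is an occurrence of the factor $12$ and $\mathrm{maj}$ is the sum of the positions of these factors; having exactly two descents forces a ballot word $1^{a_1}2^{b_1}1^{a_2}2^{b_2}1^{a_3}$ subject to the ballot inequalities $a_1\ge b_1$ and $a_1+a_2\ge b_1+b_2$. Summing $q^{2a_1+b_1+a_2}$ over these constrained compositions expresses $F_{(n-k,k),2}(q)$ as a short combination of Gaussian binomial coefficients, whose unimodality I would deduce from the classical unimodality of $\binom{m}{r}_q$ together with the palindromy above. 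The main obstacle is exactly this last step: unimodality is not preserved under arbitrary sums, so one must verify that the few Gaussian pieces assembling $F_{(n-k,k),2}(q)$ (or, arguing directly on $A_{n,2}(q)$, the coefficient differences up to the center $n$) line up monotonically rather than merely summing to a palindrome. I expect this to be tractable for $i=2$ and to grow in bookkeeping — through additional blocks and hence additional constrained composition variables — but not in essential difficulty, as $i$ increases.
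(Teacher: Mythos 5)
The statement you are addressing is an \emph{open conjecture}: the paper does not prove it, and your proposal does not either. Your reduction is sound and is in fact exactly the paper's own strategy: Robinson--Schensted sends $321$-avoiding permutations to pairs of at-most-two-rowed tableaux with $\mathrm{Des}(\sigma)=\mathrm{Des}(Q)$, giving $A_{n,i}(q)=\sum_k f^{(n-k,k)}f_{(n-k,k),i}(q)$ (the paper's Lemma~\ref{FRTLemma}, with $f^{(n-k,k)}=\binom{n}{k}\frac{n-2k+1}{n-k+1}$ by Frame--Robinson--Thrall); each slice is palindromic about the common center $ni/2$ (the paper gets this from reverse-complement rather than evacuation, but the two arguments are equivalent); and a positive sum of unimodal polynomials symmetric about a common center is unimodal. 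All of this is correct and is the content of the paper's Section~2 and the opening of Section~5.

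The genuine gap is the step you defer: unimodality of each slice $f_{(n-k,k),i}(q)$ for all $i$. That is precisely the paper's Conjecture~2, which the paper also cannot prove in general --- it establishes only $i=1$ and $i=2$ (Theorem~\ref{TwoThm}), the latter by first deriving the closed product formula of Theorem~\ref{MainThm} and then analyzing the resulting product directly. Your sketch for $i=2$ via ballot words $1^{a_1}2^{b_1}1^{a_2}2^{b_2}1^{a_3}$ and $\mathrm{maj}=2a_1+b_1+a_2$ is a plausible alternative route to the same explicit polynomial, but you stop before verifying unimodality of the resulting combination of Gaussian binomials, and you yourself identify that sums of unimodal polynomials need not be unimodal. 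More importantly, your closing assertion that larger $i$ adds ``bookkeeping but not essential difficulty'' is unsubstantiated and runs counter to the paper's own assessment that ``a more general insight will be needed to prove the full conjecture'': the obstruction for general $i$ is the division of a product of $q$-binomial coefficients by $1+q+\cdots+q^{i-1}$, an operation not known to preserve unimodality. As it stands, your proposal proves the conjecture only conditionally on the unproven unimodality of every slice, i.e., it reduces one open conjecture to another.
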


In other words, among the set of 321-avoiding permutations of length $n$ with a fixed number $k$ of descents, the major index is distributed unimodally.

In this paper we make progress toward this conjecture by finding a formula for the polynomials $A_{n,i}(q)$, which appears to break this set of permutations into finer sets among which the major index is also distributed unimodally.  We can then, by analyzing the formula, show unimodality in the case of 2 descents, but a more general insight will be needed to prove the full conjecture.

Our main theorem is

\begin{theorem}\label{MainThm} The generating function for the major inverse of standard Young tableaux of shape $(n-k,k)$ with $i$ descents is $$f_{(n-k,k),i}(q) = \frac{q^{k+i^2-i}(1-q^{n-2k+1})}{1-q^i} \left[ { {k-1} \atop {i-1} } \right]_q \left[ { {n-k} \atop {i-1} } \right]_q.$$
\end{theorem}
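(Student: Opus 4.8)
The plan is to pass through the Robinson--Schensted correspondence to replace permutations by tableaux, and then to encode a standard Young tableau of shape $(n-k,k)$ as a lattice word. Since the descent set of $\sigma$ equals the descent set of its recording tableau, it suffices to study SYT of shape $(n-k,k)$ directly. I would record such a tableau by the word $w=w_1\cdots w_n\in\{T,B\}^n$, where $w_j$ is $T$ or $B$ according as $j$ lies in the top or bottom row. The tableau condition is exactly the ballot condition that every prefix of $w$ contains at least as many $T$'s as $B$'s, with $n-k$ letters $T$ and $k$ letters $B$ in all. A descent occurs at $j$ precisely when $w_jw_{j+1}=TB$, so a tableau with $i$ descents corresponds to a ballot word with exactly $i$ occurrences of the factor $TB$ (i.e.\ $i$ peaks), and $\mathrm{maj}$ is the sum of the $i$ peak positions.

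First I would group $w$ into maximal blocks, writing it as $T^{c_1}B^{r_1}\cdots T^{c_i}B^{r_i}T^{c_{i+1}}$ with $c_1,\dots,c_i\ge 1$, $r_1,\dots,r_i\ge 1$, $c_{i+1}\ge 0$, $\sum c_\ell=n-k$, and $\sum r_\ell=k$. The $m$-th descent then sits at position $\sum_{\ell\le m}c_\ell+\sum_{\ell<m}r_\ell$, so that
$$\mathrm{maj}=\sum_{\ell=1}^{i}(i-\ell+1)\,c_\ell+\sum_{\ell=1}^{i-1}(i-\ell)\,r_\ell .$$
This is a linear form in the block sizes in which $c_{i+1}$ and $r_i$ do not appear, so the generating function over \emph{all} such words, dropping the ballot condition, factors completely. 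Making the substitutions $c_\ell=1+d_\ell$ and $r_\ell=1+s_\ell$ turns each constrained sum into a generating function for partitions in a box, and collecting the resulting shifts (using $\binom{i}{2}+\binom{i+1}{2}=i^2$) gives
$$\sum_{\text{all }i\text{-peak words}}q^{\mathrm{maj}}=q^{\,i^2}\left[{{k-1}\atop{i-1}}\right]_q\left[{{n-k}\atop{i}}\right]_q,$$
where the two Gaussian binomials arise from the $r$- and $c$-blocks respectively.

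The remaining, and genuinely hard, step is to excise the words that violate the ballot condition. Here I would use a reflection: a bad word first reaches height $-1$ at some down-step, and reflecting the initial segment up to that step swaps the letter counts to $n-k+1$ copies of $T$ and $k-1$ copies of $B$, giving a bijection between bad words and a second, unconstrained family whose $\mathrm{maj}$-generating function is again a product of the same type. The delicate point is that reflection preserves neither the number of peaks nor the major index: the factor straddling the reflection point is altered, and every position before it is displaced. The main obstacle is therefore to control \emph{exactly} how the descent number and the major index transform under the reflection, so that subtracting the bad family collapses the box binomial $\left[{{n-k}\atop{i}}\right]_q$ to $\frac{1-q^{\,n-2k+1}}{1-q^{\,i}}\left[{{n-k}\atop{i-1}}\right]_q$ and produces the overall extra factor $q^{\,k-i}$, upgrading $q^{\,i^2}$ to $q^{\,k+i^2-i}$ and yielding the stated formula. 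As consistency checks I would verify that setting $q=1$ returns the generalized Narayana count $\frac{n-2k+1}{i}\binom{k-1}{i-1}\binom{n-k}{i-1}$ of ballot paths with $i$ peaks, and that summing $f_{(n-k,k),i}(q)$ over $i$ recovers the two-row $q$-analogue of the Frame--Robinson--Thrall hook-length formula, as promised in the abstract.
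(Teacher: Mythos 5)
Your encoding of two-row tableaux as ballot words and your computation of the unrestricted generating function are correct: the linear form for $\mathrm{maj}$ in the block sizes is right, and the substitutions $c_\ell=1+d_\ell$, $r_\ell=1+s_\ell$ do yield $q^{i^2}\left[{{k-1}\atop{i-1}}\right]_q\left[{{n-k}\atop{i}}\right]_q$ for all $i$-peak words with $n-k$ letters $T$ and $k$ letters $B$. But the proof is not complete, and the missing piece is precisely the step you yourself flag as ``the main obstacle.'' For the subtraction to produce the stated formula, the bad words must be shown to have generating function $q^{i^2}\left[{{k-1}\atop{i}}\right]_q\left[{{n-k}\atop{i-1}}\right]_q$: this is forced by taking the difference of your unrestricted count and the claimed answer, using $\left[{{n-k}\atop{i}}\right]_q=\frac{1-q^{n-k-i+1}}{1-q^i}\left[{{n-k}\atop{i-1}}\right]_q$ and $\left[{{k-1}\atop{i}}\right]_q=\frac{1-q^{k-i}}{1-q^i}\left[{{k-1}\atop{i-1}}\right]_q$. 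The natural reflection target --- unrestricted $i$-peak words with $n-k+1$ letters $T$ and $k-1$ letters $B$ --- has, by your own product formula, generating function $q^{i^2}\left[{{k-2}\atop{i-1}}\right]_q\left[{{n-k+1}\atop{i}}\right]_q$, which is not the required quantity. So the classical reflection really does distort both the peak count and $\mathrm{maj}$, by amounts depending on where the first violation occurs, and no bookkeeping for this transformation is supplied. Until that transformation law is established (or some other sieve or bijection onto a family with generating function $q^{i^2}\left[{{k-1}\atop{i}}\right]_q\left[{{n-k}\atop{i-1}}\right]_q$ is found), the argument does not prove the theorem; what you have proved is only the unrestricted identity.

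For contrast, the paper avoids any inclusion--exclusion over the ballot condition: it conditions on the last entry $n$ of the tableau, obtaining the recurrence $f_{(n-k,k),i}(q)=f_{(n-k-1,k),i}(q)+\sum_{k_0=i-1}^{k-1}q^{n-k+k_0}f_{(n-k-1,k_0),i-1}(q)$, verifies the base case $i=1$ directly, and closes the induction with two $q$-binomial summation identities and a routine polynomial verification. If you want to salvage your route, note that the required bad-word generating function has the same product shape as your unrestricted one but with the lower indices of the two Gaussian binomials exchanged between the $T$-blocks and the $B$-blocks; this suggests seeking a bijection from bad words to unrestricted words in which the roles of the $c$- and $r$-weights are swapped, rather than the standard height-$(-1)$ reflection. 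Your $q=1$ and sum-over-$i$ consistency checks are sensible but of course do not substitute for the missing step.
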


By the Frame-Robinson-Thrall formula it holds that, whatever the distribution $f_{(n-k,k),i}$ of the major index over two-rowed tableaux with $i$ descents,

\begin{lemma}\label{FRTLemma} $$A_{n,i}(q) = \sum_{k=i}^{\lfloor n/2 \rfloor} f_{(n-k,k),i}(q) \cdot \binom{n}{k} \frac{n-2k+1}{n-k+1}.$$
\end{lemma}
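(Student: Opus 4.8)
The plan is to push the defining sum for $A_{n,i}(q)$ through the Robinson--Schensted correspondence and to exploit the fact that the major index and descent number of a permutation are read off entirely from one of its two associated tableaux. First I would recall Schensted's theorem: under Robinson--Schensted a permutation $\sigma \in \mathcal{S}_n$ maps bijectively to a pair $(P,Q)$ of standard Young tableaux of a common shape $\lambda \vdash n$, and the length of the longest decreasing subsequence of $\sigma$ equals the number of rows of $\lambda$. Hence $\sigma$ avoids $321$ --- that is, has no decreasing subsequence of length $3$ --- precisely when $\lambda$ has at most two rows, so the correspondence restricts to a bijection between $\mathcal{S}_n(321)$ and pairs $(P,Q)$ of standard Young tableaux of shape $(n-k,k)$ for $0 \le k \le \lfloor n/2\rfloor$.

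The second ingredient is the classical compatibility of descents with the correspondence: if $\sigma \mapsto (P,Q)$ then $\text{Des}(\sigma) = \text{Des}(Q)$, where the descent set of a tableau records those $j$ for which $j+1$ lies in a strictly lower row than $j$. Consequently $\text{des}\,\sigma = \text{des}(Q)$ and $\text{maj}\,\sigma = \text{maj}(Q)$, so both statistics appearing in $A_{n,i}(q)$ depend only on the recording tableau $Q$ and not at all on $P$. Since $321^{-1}=321$, the class $\mathcal{S}_n(321)$ is closed under inversion, and because inversion swaps $P$ and $Q$ one may equally work with the insertion tableau; this is the sense in which the distribution $f$ may be phrased through the ``major inverse,'' and it guarantees that either reading yields the same generating function.

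With these in hand I would compute directly. Writing $A_{n,i}(q) = \sum_{\sigma} q^{\text{maj}\,\sigma}$ over the $\sigma \in \mathcal{S}_n(321)$ with $\text{des}\,\sigma = i$, I group the sum first by the shape $(n-k,k)$ and then by the recording tableau $Q$. For a fixed $Q$ of shape $(n-k,k)$ with $\text{des}(Q)=i$, the weight $q^{\text{maj}\,\sigma}=q^{\text{maj}(Q)}$ is constant while $P$ ranges freely over all standard Young tableaux of that same shape. The sum therefore factors as
$$A_{n,i}(q) = \sum_{k} \Big( \sum_{Q:\ \text{des}(Q)=i} q^{\text{maj}(Q)} \Big)\, f^{(n-k,k)},$$
where $f^{(n-k,k)}$ denotes the number of standard Young tableaux of shape $(n-k,k)$. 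The inner sum is exactly $f_{(n-k,k),i}(q)$ by definition, while the Frame--Robinson--Thrall hook-length formula evaluates the two-row count as $f^{(n-k,k)} = \binom{n}{k}\frac{n-2k+1}{n-k+1}$. Finally the range of summation is pinned down: $(n-k,k)$ is a partition only for $k \le \lfloor n/2\rfloor$, while a two-row shape can carry $i$ descents only when $k \ge i$ --- for $k<i$ the tableau sum, and indeed the $q$-binomial expression of Theorem \ref{MainThm}, vanishes --- giving the stated bounds $i \le k \le \lfloor n/2\rfloor$.

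The only genuinely delicate point is the descent-compatibility identity $\text{Des}(\sigma)=\text{Des}(Q)$ together with the bookkeeping of which tableau carries the statistic; once it is clear that $\text{maj}$ and $\text{des}$ are functions of $Q$ alone, the factorization of the sum and the substitution of the hook-length count are routine. I expect no obstacle beyond citing the descent theorem correctly and confirming that the tableau statistic used to define $f_{(n-k,k),i}$ agrees with the recording-tableau reading, which the inversion symmetry of $\mathcal{S}_n(321)$ secures.
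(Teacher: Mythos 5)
Your proposal is correct and follows essentially the same route as the paper: Robinson--Schensted restricted to two-row shapes via Schensted's theorem, descent/major-index compatibility with the recording tableau, and the Frame--Robinson--Thrall count $\binom{n}{k}\frac{n-2k+1}{n-k+1}$ of insertion tableaux for each fixed recording tableau. The only additions are your explicit justifications of the summation bounds and of the insertion-vs-recording ambiguity via closure of $\mathcal{S}_n(321)$ under inversion, both of which the paper leaves implicit.
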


Combined with the main theorem this gives a formula for $A_{n,i}(q)$.

Given the theorem and lemma, Sagan et al.'s conjecture would follow from the statement, empirically verified for $n \leq 30$, that

\begin{conjecture} For all $n$, $k$, $i$, the polynomials $f_{(n-k,k),i}(q)$ are symmetric and unimodal with central term $ni/2$.
\end{conjecture}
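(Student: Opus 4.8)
The plan is to derive the conjecture directly from the closed form of Theorem~\ref{MainThm}, handling the (routine) symmetry and location of the center first and the (genuinely hard) unimodality second. Throughout write $f=f_{(n-k,k),i}$.

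\medskip

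\noindent\textbf{Symmetry and center.} First I would verify the palindrome relation $q^{ni}f(1/q)=f(q)$ termwise in the formula. Each cyclotomic factor transforms as $1-q^{-m}=-q^{-m}(1-q^m)$, each Gaussian binomial as $\left[ {a \atop b} \right]_{1/q}=q^{-b(a-b)}\left[ {a \atop b} \right]_q$, and the monomial $q^{k+i^2-i}$ simply flips the sign of its exponent. Collecting the powers of $q$ produced by $q\mapsto 1/q$ in the prefactor $q^{k+i^2-i}(1-q^{n-2k+1})/(1-q^i)$ together with those from the two binomials, the accumulated shift is exactly $-ni$; equivalently, the minimal degree $k+i^2-i$ and the maximal degree $ni-i^2+i-k$ sum to $ni$. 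This establishes that $f$ is palindromic about $ni/2$, yielding both the symmetry and the claimed central term, and it reduces the whole problem to unimodality.

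\medskip

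\noindent\textbf{Unimodality: the tractable part.} The product $\left[ {k-1 \atop i-1} \right]_q\left[ {n-k \atop i-1} \right]_q$ is symmetric and unimodal, since each Gaussian binomial is symmetric and unimodal and the product of two symmetric unimodal polynomials with nonnegative coefficients is again symmetric and unimodal. Multiplying by the monomial $q^{k+i^2-i}$ only translates the coefficient sequence and preserves this property. Hence everything comes down to understanding the effect of the remaining factor $(1-q^{n-2k+1})/(1-q^i)$ on a symmetric unimodal sequence.

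\medskip

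\noindent\textbf{The obstacle.} This is where I expect the real difficulty to lie. For $i\ge 2$ the factor $(1-q^{n-2k+1})/(1-q^i)$, though symmetric, is far from unimodal: where it is a genuine polynomial it is supported on an arithmetic progression of step $i$, so the naive ``product of unimodal factors'' argument fails and the product theorem cannot be applied directly. One must instead show that convolving the gappy factor with the symmetric unimodal binomial product still produces a unimodal sequence, which must hinge on the precise arithmetic relationship among $n-2k+1$, $i$, and the two binomial parameters. I would attack this along one of three lines: (i) take $i=2$ as a base case, where the factor collapses to $(1+q)^{-1}[n-2k+1]_q$ and the coefficients of $f$ can be made fully explicit, then attempt to induct on $i$ via the $q$-Pascal recurrence for the binomials; (ii) seek an O'Hara/KOH-style manifestly nonnegative decomposition of $f$ into a sum of products of Gaussian binomials, each summand symmetric and unimodal about $ni/2$; or (iii) construct a direct combinatorial injection on the ballot words of content $(n-k,k)$, graded by the positions of their descents (a top-row entry immediately followed by a bottom-row entry), realizing a symmetric chain decomposition. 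Of these, proving that the non-unimodal ratio factor cannot destroy unimodality of the product is the single hardest point and is precisely the content the conjecture leaves open; the $i=2$ case is the natural first milestone and the one I would settle before attempting the general argument.
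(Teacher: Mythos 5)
Your proposal matches the paper's own treatment of this statement: it remains a conjecture there as well, with only the symmetry and the central term $ni/2$ actually established (in the ``Future directions'' section, via the same degree computation $k+i^2-i$ versus $ni-k-i^2+i$ and the observation that dividing a symmetric polynomial by $1+q+\dots+q^{i-1}$ preserves symmetry, which is equivalent to your $q \mapsto 1/q$ functional-equation check), while unimodality is proved only for $i=2$ (Theorem \ref{TwoThm}). Your identification of the factor $(1-q^{n-2k+1})/(1-q^i)$ --- i.e.\ division by $1+q+\dots+q^{i-1}$ --- as the sole obstruction, your choice of $i=2$ as the first milestone, and your suggested symmetric-chain-decomposition attack on the associated lattice paths are precisely the reduction, the settled case, and the proposed route that appear in the paper.
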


We can show combinatorially that 

\begin{theorem}\label{TwoThm} For all $n$, $k$, the polynomials $f_{(n-k,k),2}(q)$ are symmetric and unimodal with central term $n$.
\end{theorem}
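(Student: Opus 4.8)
The plan is to read $f_{(n-k,k),2}$ off Theorem \ref{MainThm} and reduce the claim to a statement about products of $q$-integers. Setting $i=2$ and writing $[m]_q=\tfrac{1-q^m}{1-q}=1+q+\cdots+q^{m-1}$ for the $q$-integer, the formula becomes
\[
f_{(n-k,k),2}(q)=\frac{q^{k+2}}{1+q}\,[\,n-2k+1\,]_q\,[\,k-1\,]_q\,[\,n-k\,]_q,
\]
where I have used $\left[{ {k-1}\atop{1} }\right]_q=[k-1]_q$, $\left[{ {n-k}\atop{1} }\right]_q=[n-k]_q$, and $1-q^2=(1-q)(1+q)$. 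The three $q$-integers have lengths obeying $(n-2k+1)+(k-1)=n-k$, a relation I will use repeatedly. The lowest and highest powers of $q$ occurring are $k+2$ and $2n-k-2$, so once symmetry is known its center must be the midpoint $n$ of the support, matching the claimed central term.

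For symmetry I would argue in either of two ways. Algebraically, each $[m]_q$ is palindromic, hence so is the product $[n-2k+1]_q[k-1]_q[n-k]_q$; since $1+q$ is palindromic and multiplication by $q^{k+2}$ merely shifts exponents, $f_{(n-k,k),2}$ is palindromic with center $n$. Combinatorially, the same fact follows from Sch\"utzenberger evacuation on standard Young tableaux of shape $(n-k,k)$: evacuation is a shape-preserving involution that reverses the descent set, sending a tableau with $\mathrm{Des}=\{d_1,d_2\}$ to one with $\mathrm{Des}=\{n-d_1,\,n-d_2\}$, so $\mathrm{maj}\mapsto 2n-\mathrm{maj}$ while the descent number stays $2$. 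Either route gives symmetry about $n$.

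Unimodality is the substance of the theorem. A product of $q$-integers is a product of symmetric, unimodal polynomials with nonnegative coefficients, and such products are again symmetric and unimodal; thus $[n-2k+1]_q[k-1]_q[n-k]_q$ is symmetric and unimodal, and were it not for the factor $1/(1+q)$ I would be finished. The obstacle is precisely this division by $1+q=[2]_q$. The relation $(n-2k+1)+(k-1)=n-k$ guarantees that one of the three factors always has even length (the parity of $n-k$ dictating which, hence a short case split), and absorbing $[2]_q$ into that factor replaces it by a \emph{gapped} polynomial $1+q^2+q^4+\cdots$, which is itself not unimodal. So the naive product argument breaks down; indeed $U\cdot(1+q^2+\cdots+q^{2m-2})$ can fail to be unimodal for a sufficiently peaked symmetric unimodal $U$.

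To push through, I would exploit the extra rigidity of the surviving factor: the coefficient sequence of a product of $q$-integers is trapezoidal, changing by at most $1$ between consecutive terms. Decomposing $[n-2k+1]_q[k-1]_q$ (or the appropriate pair) into a sum of co-centered unit intervals and convolving with the comb $1+q^2+\cdots$ reduces the claim that the coefficients of $f_{(n-k,k),2}$ increase up to $q^{n}$ to a monotonicity statement for a count of lattice points in a sliding symmetric window. Equivalently, and more in the spirit of a combinatorial proof, one can seek an explicit injection from the two-descent tableaux with $\mathrm{maj}=t$ into those with $\mathrm{maj}=t+1$ for each $t<n$, which together with the evacuation symmetry yields unimodality. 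I expect this convolution/injection step — controlling the interaction between the $1$-Lipschitz trapezoid and the parity comb — to be the main obstacle, while the specialization, the symmetry, and the parity bookkeeping are routine.
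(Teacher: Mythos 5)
Your reduction coincides with the paper's: specialize Theorem \ref{MainThm} at $i=2$, note the support runs from $q^{k+2}$ to $q^{2n-k-2}$ so the center is $q^{n}$, obtain symmetry from palindromicity of the $q$-integer factors, and use the length identity $(n-2k+1)+(k-1)=n-k$ to see that one of the three factors always has even length, into which $1+q$ can be absorbed to produce the comb $1+q^{2}+q^{4}+\cdots$. All of that matches the paper, and your diagnosis that the naive ``product of unimodal polynomials'' argument breaks on the comb is exactly right.

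The problem is that you stop at the decisive step and explicitly defer it (``I expect this convolution/injection step \dots to be the main obstacle''): you never prove that the convolution of the comb with the remaining trapezoid is unimodal, and that is the entire content of the theorem beyond bookkeeping. The paper closes this step in two stages. When $k$ is odd the comb comes from $[k-1]_q$ and has span $k-3$, while the plateau of the trapezoid $(1+\cdots+q^{n-k-1})(1+\cdots+q^{n-2k})$ occupies the $k$ consecutive degrees $n-2k,\dots,n-k-1$; since $k-3<k$, every shifted copy finishes rising before any copy begins falling, so the sum is immediately unimodal. In the remaining parity cases, where the comb may outlast the plateau, the paper uses precisely the observation you gesture at but do not exploit: each shifted trapezoid changes by exactly $+1$, $0$, or $-1$ per step, so the discrete derivative of the sum at a given degree equals the number of copies currently rising minus the number currently falling, and this count is nonnegative up to the center and nonpositive afterward. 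To turn your proposal into a proof you must establish that sign statement (or actually construct the injection you mention); as written, the proposal correctly identifies the obstacle but does not overcome it.
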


Hence it follows that $A_{n,2}$ is unimodal.  (Unimodality of the case $A_{n,1}(q)$ is an undergraduate exercise in showing that $A_{n,1}(q) = (1+q)^n - (1+q+\dots+q^n)$.)

In Section 2 we provide definitions of all background terms and relevant theorems from the literature.  In Section 3 we prove Theorems \ref{MainThm} and \ref{TwoThm}. In Section 4 we adduce a few interesting observations on these and related polynomials, including a \emph{non}-unimodality result for 132-avoiding permutations.  In Section 5 we conclude by considering future lines of investigation, primarily the challenge of proving unimodality for general $i$, and observe the interesting link that our formula has to the Stanley hook formula for standard Young tableaux, for which our formula is an intriguing hint of a more general refinement.

\section{Definitions and background}

A sequence $(\tau_1,\dots,\tau_k)$ is \emph{order-isomorphic} to a permutation $\pi = (\pi_1, \dots, \pi_n)$ if $\tau_i > \tau_j \iff \pi_i > \pi_j$.  If no subsequence $(\sigma_{i_1},\dots,\sigma_{i_k})$, $i_1 < \dots < i_k$, of a permutation $\sigma = (\sigma_1,\dots,\sigma_n)$ is order-isomorphic to the pattern $\pi$, we say that $\sigma$ avoids $\pi$.  Denote the set of $\pi$-avoiding permutations of length $n$ by ${\mathcal{S}}_n(\pi)$.  Two patterns $\pi$ and $\tau$ are \emph{Wilf-equivalent} if $\vert {\mathcal{S}}_n(\pi)\vert $ = $ \vert {\mathcal{S}}_n(\tau) \vert $ for all $n$.  A motivating problem in the theory of pattern avoidance is the determination of Wilf equivalence classes.  For instance, all six patterns of length 3 are Wilf equivalent, but $\vert {\mathcal{S}}_5(1234) \vert \neq \vert {\mathcal{S}}_5(2413) \vert$.

In \cite{SagSav}, a refined problem of Wilf equivalence was introduced.  Consider the distribution of a permutation statistic $\text{st}:{\mathcal{S}}_n \rightarrow \mathbb{N}$ over two different sets ${\mathcal{S}}_n(\pi)$ and ${\mathcal{S}}_n(\tau)$.  If equality holds for all $n$ for the two generating function polynomials, i.e. $$\sum_{\sigma \in {\mathcal{S}}_n(\pi)} q^{\text{st } \sigma} = \sum_{\sigma \in {\mathcal{S}}_n(\tau)} q^{\text{st } \sigma}$$ for all $n$, then we say that $\pi$ and $\tau$ are \emph{st-Wilf equivalent}. 

Clearly $st$-Wilf equivalence implies Wilf equivalence but not vice versa. For an easy example, note that $Des(\sigma) = \{ i : \sigma_i > \sigma_{i+1} \}$ is the \emph{descent set} of $\sigma$, and denote by $des(\sigma) = \vert Des(\sigma) \vert$ the \emph{descent number} of a permutation.  Then the operation of reverse-complement, $$rc(\sigma) = (n+1-\sigma_k,n+1-\sigma_{k-1},\dots,n+1-\sigma_1),$$ preserves the descent number: $des(rc(\sigma)) = des(\sigma)$.  Since $rc \left( {\mathcal{S}}_n(213) \right) = {\mathcal{S}}_n(132)$, it follows that 213 and 132 are $des$-Wilf equivalent.  However, 123 and 321 are not, despite being Wilf equivalent.

The \emph{major index} of a permutation is the statistic $maj(\sigma) = \sum_{i : \sigma_i > \sigma_{i+1}} i$.  In the study of $maj$-Wilf equivalence, it is useful to break down ${\mathcal{S}}_n(\pi)$ into its subsets with given numbers of descents, occasioning study of the two-variable polynomial $$\sum_{\sigma \in {\mathcal{S}}_n(\pi)} q^{maj(\sigma)} t^{des(\sigma)}.$$

These polynomials prove to have interesting combinatorial properties in their own right, and in this paper we focus on $$\sum_{\sigma \in {\mathcal{S}}_n(321)} q^{maj(\sigma)} t^{des(\sigma)} = \sum_{i=0}^n t^i A_{n,i}(q).$$

A polynomial $f(q) = \sum_{k=a}^n b_k q^k$, $a \geq 0$, is \emph{symmetric} if $f(q) = q^{n+a} f\left( q^{-1}\right)$, in which case its coefficients form a palindromic sequence.  It is \emph{unimodal} if the sequence of its coefficients is unimodal, i.e. there is some $j$ such that $$b_a \leq b_{a+1} \leq \dots \leq b_j \geq b_{j+1} \geq \dots b_n.$$

That $A_{n,i}(q)$ is symmetric has a short proof: $rc(\sigma)$ preserves ${\mathcal{S}}_n(321)$ as well as $des(\sigma)$, and $maj(rc(\sigma)) = ni - maj(\sigma)$ for a permutation $\sigma \in {\mathcal{S}}_n$ with $i$ descents.  Hence, for every $\sigma \in {\mathcal{S}}_n(321)$ with $i$ descents and major index $c$, there is another with major index $ni-c$, and so the polynomial $A_{n,i}(q)$ forms a palindromic sequence around center $ni/2$, with minimum degree $i^2$ and maximum degree $ni-i^2$ (if $ni/2$ is not an integer, the two coefficients to either side are equal and maximal).\footnote{The same argument applies to any pattern preserved by reverse-complement.  This is the easy generalization of a response by Stanley to a MathOverflow discussion, \cite{StanleyMO}. With sporadic exceptions these seem to be the only patterns for which major index is symmetric over descent; a master's thesis studying this phenomenon for these and other statistics can be found at \cite{JTThesis}.}

Unimodality is, as stated above, a considerably harder question still open in general.  Our main result hopefully simplifies this problem by refining the sets which must be considered, to smaller sets which still exhibit unimodality.

A \emph{partition} of $n$ is a nonincreasing sequence $\lambda = (\lambda_1, \dots, \lambda_j)$ of positive integers that sums to $n$. We say such a partition has $j$ parts and write $\lambda \vdash n$, or $\vert \lambda \vert = n$.  The \emph{Ferrers diagram} of a partition is a set of unit squares in the fourth quadrant justified to the origin, in which the $i$-th row has $\lambda_i$ squares.  A \emph{standard Young tableau} of shape $\lambda$ is a filling of the Ferrers diagram of the partition $\lambda$ with the numbers 1 through $n$ in which rows and columns increase down and right.  The Ferrers diagram of the partition $(4,2,2,1)$ of 9 and a valid standard Young tableau of this shape are illustrated below.

$$\young(\hfil\hfil\hfil\hfil,\hfil\hfil::,\hfil\hfil::,\hfil::) \quad \quad \quad \young(1249,36::,57::,8:::)$$

Denote by $SYT(\lambda)$ the number of standard Young tableaux of shape $\lambda$.

The famous \emph{Robinson-Schensted correspondence} is a bijection between permutations of length $n$ and the set of pairs of standard Young tableaux on partitions of $n$ of the same shape.  The details of the bijection will not concern us, but the bijection possesses several extremely useful properties, among them:

\begin{enumerate}
\item Permutations avoiding 321 map to pairs of partitions of shape $(n)$ (only the identity permutation $12\dots n$) or $(n-k,k)$.
\item A place $i \in Des(\sigma)$ if and only if $i$ is in a row strictly higher than $i+1$ in the second tableau of the SYT pair (the ``recording tableau'').
\end{enumerate}

The latter property defines descents and major index on tableaux completely analogously to permutations, with the descent set of a tableau being those $i$ in a strictly higher row than $i+1$ and the major index being the sum of such $i$.

The \emph{hook-length} of each square in the Ferrers diagram of $\lambda$ equals the total of the number of squares directly below and to that square's right, plus 1 for itself.  The hook lengths of $\lambda=(4,2,2,1)$ are illustrated below.

$$\young(7521,42::,31::,1:::)$$

The \emph{Frame-Robinson-Thrall hook-length formula} says that the number of standard Young tableaux of shape $\lambda$ is $$\frac{n!}{\prod h_{ij}},$$ where $h_{ij}$ is the hook length of the square with lower right corner at $(-i,-j)$ in the plane.

This is particularly useful to us since this means that, given a particular standard Young tableau of shape $(n-k,k)$ with $i$ descents and major index $M$, there are exactly $\binom{n}{k}\frac{n-2k+1}{n-k+1}$ permutations with this SYT as their recording tableau and hence this descent pattern and major index.  Thus, if we denote the polynomials $$f_{(n-k,k),i}(q) = \sum_{{T \in SYT(\lambda)} \atop {\vert Des(T)\vert = i}} q^{maj(T)},$$ it immediately follows from the above discussion that $$A_{n,i}(q) = \sum_{k=i}^{\lfloor n/2 \rfloor} f_{(n-k,k),i}(q) \cdot \binom{n}{k} \frac{n-2k+1}{n-k+1}$$ and Lemma \ref{FRTLemma} holds.

Finally, the generating function for partitions in which the largest part is at most $M$ and the number of parts is at most $N$ is the $q$-\emph{binomial coefficient} $$\sum_{{\lambda_1 \leq M} \atop {\lambda \text{ has } \leq N \text{ parts}}} q^{\vert \lambda \vert} = \left[ {{M+N} \atop N} \right]_q =: \frac{(1-q)(1-q^2)\dots(1-q^{M+N})}{(1-q)\dots(1-q^M)(1-q)\dots(1-q^N)} =: \frac{(q)_{M+N}}{(q)_M (q)_N}.$$ Here $(q)_N = (1-q)\dots(1-q^N)$. We have the special cases $\left[ {j \atop 0} \right]_q = 1$ for $j \geq 0$, $\left[ {j \atop k} \right]_q = 0$ for $k \geq 0$, $j < k$.

\section{Proof of the main theorems}

\subsection{Proof of Theorem \ref{MainThm}}

We prove the theorem by recurrence, beginning with the base case $i=1$.

A SYT of shape $(n-k,k)$ with exactly 1 descent must be of the form

\begin{center}
\begin{tabular}{c}
\begin{tikzpicture}
\draw (-0.2,0) -- (7.5,0) -- (7.5,-0.5) -- (-0.2,-0.5) -- (-0.2,-1) -- (3,-1);
\draw (-0.2,0) -- (-0.2,-0.5);
\draw (0.5,0) -- (0.5,-1);
\draw (1,0) -- (1,-1);
\draw (1.35,0) -- (1.35,-1);
\draw (1.85,0) -- (1.85,-1);
\draw (2.3,0) -- (2.3,-1);
\draw (3,0) -- (3,-1);
\draw (3.75,0) -- (3.75,-0.5);
\draw (4.5,0) -- (4.5,-0.5);
\draw (5,0) -- (5,-0.5);
\draw (6.25,0) -- (6.25,-0.5);
\draw(6.75,0) -- (6.75,-0.5);
\draw (0.1,-0.25) node {1};
\draw (0.7,-0.25) node {2};
\draw (1.2,-0.25) node {3};
\draw (1.65,-0.25) node {\dots};
\draw (2.1,-0.25) node {k-1};
\draw (2.7,-0.25) node {k};
\draw (3.35,-0.25) node {k+1};
\draw (4.2,-0.25) node {\dots};
\draw (4.8,-0.25) node {j};
\draw (5.6,-0.25) node {j+k+1};
\draw (6.5,-0.25) node {\dots};
\draw (7,-0.25) node {n};
\draw (0.2,-0.75) node {j+1};
\draw (0.8,-0.75) node {\dots};
\draw (2.65,-0.75) node {j+k};
\end{tikzpicture}
\end{tabular}
\end{center}

\noindent for some $k \leq j \leq n-k$.  Its major index is precisely $j$. Hence $$f_{(n-k,k),1}(q) = q^k+\dots + q^{n-k}.$$

This is the claim of the theorem for the case $i=1$.

We observe the recurrence

\begin{lemma} For $i>1$, $$f_{(n-k,k),i}(q) = f_{(n-k-1,k),i}(q) + \sum_{k_0=i-1}^{k-1} q^{n-k+k_0}f_{(n-k-1,k_0),i-1}(q).$$
\end{lemma}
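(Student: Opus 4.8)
The plan is to prove this recurrence by a direct combinatorial decomposition of the standard Young tableaux of shape $(n-k,k)$ with exactly $i$ descents, classifying them according to the position of the largest entry $n$. The key structural observation is that in any SYT, the entry $n$ must sit at the end of one of its two rows (it is the maximum, so nothing can lie below or to its right). This immediately splits the set of tableaux into two classes, and I expect each class to correspond to one of the two terms on the right-hand side.

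First I would handle the case where $n$ lies at the end of the first (top) row. Removing $n$ from the corner of shape $(n-k,k)$ leaves a valid SYT of shape $(n-k-1,k)$. The crucial point to verify is that this removal affects neither the number of descents nor the major index: since $n$ is the largest entry, it is never the top element of a descent pair, and $n-1$ sits in either row but the descent status of $n-1$ (whether it lies strictly above $n$) is governed only by whether $n-1$ is in row 1 or row 2, which is unchanged by deleting $n$ from row 1. So this class contributes exactly $f_{(n-k-1,k),i}(q)$, matching the first term. I would want to state carefully that $n$ is never a descent position (there is no position $n$ in a length-$n$ tableau's descent set), so deleting it cannot remove a descent.

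Next comes the case where $n$ sits at the end of the second (bottom) row, which I expect to produce the sum. Here removing $n$ yields a SYT of shape $(n-k,k-1)$, but the subtlety is that placing $n$ at the end of row 2 forces $n-1$ to lie in row 1 (otherwise $n-1$ would be immediately left of $n$ in row 2 with nothing forcing a descent at $n-1$, and we must check the descent count drops by exactly one). Since $n$ is in a strictly lower row than... no: rather, $n-1$ in row 1 versus $n$ in row 2 means $n-1$ is in a strictly \emph{higher} row than $n$, so $n-1 \in \mathrm{Des}$; deleting $n$ removes this one descent, taking $i$ down to $i-1$. The major index decreases by $n-1$ from this lost descent. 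To obtain the stated form I would reindex: writing the remaining shape after also tracking the length of row 2 as $k_0$ (the number of entries in row 2 after $n-1$ settles the structure), the contributing factor $q^{n-k+k_0}$ should emerge as the combined weight of the position $n-1$ contributes as a descent plus the shift in how the two rows interleave. The main obstacle, and the step I would spend the most care on, is pinning down exactly which smaller shapes $(n-k-1,k_0)$ arise and getting the exponent $n-k+k_0$ right rather than off by a constant; this requires tracking how the major index of the reduced tableau relates to that of the original once both $n$ and the forced descent at $n-1$ are accounted for, and correctly summing over the admissible second-row lengths $k_0$ ranging from $i-1$ (the minimum needed to support $i-1$ descents) up to $k-1$.

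Finally I would confirm the two cases are exhaustive and disjoint, and that the index ranges on the sum are exactly as claimed, completing the recurrence. I anticipate the descent-bookkeeping in the second case is the genuinely delicate part, whereas the first case and the exhaustiveness argument are routine.
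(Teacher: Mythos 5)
Your first case is correct and is exactly what the paper does: the split into ``$n$ ends the top row'' versus ``$n$ ends the bottom row'' is the same as the paper's split on whether the top row ends with $n$, and the deletion of $n$ from the top row preserving shape $(n-k-1,k)$, descent count, and major index is right. The gap is in the second case. There you propose to delete only the single cell containing $n$, which yields a tableau of shape $(n-k,k-1)$; but the recurrence's second term involves shapes $(n-k-1,k_0)$, whose \emph{top} row is shorter, and a weight $q^{n-k+k_0}$ rather than the $q^{n-1}$ your analysis would produce. You correctly flag this mismatch as ``the main obstacle,'' but you do not resolve it, and single-cell removal cannot resolve it: when $n$ sits at the end of row 2, the entry $n-1$ need \emph{not} lie in row 1 (it may sit immediately to its left in row 2), so your deletion sometimes removes a descent and sometimes does not, forcing you to track a refined statistic (the row of $n-1$) that does not appear in the claimed recurrence. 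Your parenthetical ``placing $n$ at the end of row 2 forces $n-1$ to lie in row 1'' is false as stated.

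The missing idea is a \emph{block} removal. If the top row does not end with $n$, let its last entry be $v = n-k+k_0$; then all of $v+1,\dots,n$ are forced to occupy the final $k-k_0$ cells of the bottom row as a consecutive run, $v$ is the top of a descent contributing exactly $v$ to the major index, and no entry of the run $v+1,\dots,n-1$ is a descent top. Deleting $v$ from the top row together with the entire trailing run from the bottom row leaves a SYT of shape $(n-k-1,k_0)$ with exactly $i-1$ descents and major index reduced by exactly $n-k+k_0$; conversely the deleted block can be reattached in only one way. This is what produces the term $q^{n-k+k_0}f_{(n-k-1,k_0),i-1}(q)$ and the range $i-1\le k_0\le k-1$ (the lower bound because the smaller tableau must still support $i-1$ descents). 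Without this block deletion your second case does not reach the stated formula.
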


\noindent (We only define SYT on partitions and so there are no SYT of ``shape'' $(k-1,k)$.  Hence we set $f_{(k-1,k),i}(q) = 0$ for all $i$ and all $k>0$.)

\begin{proof}

Either the top row of the SYT ends with $n$, or it does not.  If it does, then removal of the box containing $n$ results in a SYT with $i$ descents of shape $(n-k-1,k)$, and this is a bijection since by taking any such tableau and appending $n$ to the top row we obtain a SYT of the foregoing type.  This is the first term.

The second term arises when the final element of the top row is $n-k+k_0$, $k_0 < k$.  Then this element is the top of a descent, and the tableau is of the form

\begin{center}
\begin{tabular}{c}
\begin{tikzpicture}
\draw (0,0) -- (9.6,0) -- (9.6,-0.5) -- (0,-0.5) -- (0,-1) -- (5.7,-1) -- (5.7,0);
\draw (0,0) -- (0,-0.5);
\draw (8.4,-0.25) node {$n-k+k_0$};
\draw (3.1,-0.75) node {$n-k+k_0+1$};
\draw (1.8,0) -- (1.8,-1);
\draw (0.6,0) -- (0.6,-1);
\draw (1.2,0) -- (1.2,-1);
\draw (0.3,-0.25) node {$\blacksquare$};
\draw (0.3,-0.75) node {$\blacksquare$};
\draw (0.9,-0.25) node {$\dots$};
\draw (0.9,-0.75) node {$\dots$};
\draw (1.5,-0.25) node {$\blacksquare$};
\draw (1.5,-0.75) node {$\blacksquare$};
\draw (4.5,0) -- (4.5,-1);
\draw (3.1,-0.25) node {$\blacksquare$};
\draw (5.1,0) -- (5.1,-1);
\draw (4.8,-0.25) node {$\dots$};
\draw (4.8,-0.75) node {$\dots$};
\draw (5.4,-0.25) node {$\blacksquare$};
\draw (5.4,-0.75) node {$n$};
\draw (6,-0.25) node {$\blacksquare$};
\draw (6.3,0) -- (6.3,-0.5);
\draw (6.6,-0.25) node {$\dots$};
\draw (6.9,0) -- (6.9,-0.5);
\draw (7.5,0) -- (7.5,-0.5);
\draw (7.2,-0.25) node {$\blacksquare$};
\end{tikzpicture}
\end{tabular}
\end{center}

\noindent where the black squares constitute a standard Young tableau in which the top row is of length $n-k-1$, the bottom row is of length $k_0 < k$, and there are exactly $i-1$ descents.  Once $n$, $k$, and such a tableau are selected, there is only one way to create a tableau by appending a single element $n-k+k_0$ to the end of the first row, and filling the remainder of the bottom row to length $k$ with the remaining elements.  This adds $n-k+k_0$ to the major index of such a tableau.  The possible values for $k_0$ run from a minimum of $i-1$ to a maximum of $k-1$.  These statements give the second term, and thus the recurrence holds.

\end{proof}

Observe that for the partition $(k,k)$, i.e. $k = \frac{n}{2}$, there are no partitions of shape $(k-1,k)$ and, likewise, that there are no terms that arise by removing only the last element of the first row, because this element must be the top of a descent.

Therefore, for $i>1$, it suffices for the induction to assume that we have established the truth of the formula for $i-1$ for all $n$, $k$, and for $(n-1,k)$ when this exists.  We have established the truth of the formula for $i=1$ for all $n$ and $k$ and now proceed to sum the recurrence.

We require two summation identities on $q$-binomial coefficients which we state in the next lemma.

\begin{lemma} For $m,n \geq 0$, 
\begin{align}
\sum_{j=0}^n q^j \left[ {{m+j} \atop m} \right]_q &= \left[ {{n+m+1} \atop {m+1}} \right]_q \\
\sum_{j=a}^B q^{2j} \left[ {j \atop a} \right]_q &= q^{2a}  \left( \left[ {{B+2} \atop {a+2}} \right]_q - q \left[ {{B+1} \atop {a+2}} \right]_q \right) .
\end{align}
\end{lemma}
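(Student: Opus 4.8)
The plan is to prove both identities from the two forms of the $q$-Pascal recurrence, which in the paper's conventions read
$$\left[ {n \atop k} \right]_q = \left[ {{n-1} \atop {k-1}} \right]_q + q^{k}\left[ {{n-1} \atop k} \right]_q = q^{n-k}\left[ {{n-1} \atop {k-1}} \right]_q + \left[ {{n-1} \atop k} \right]_q,$$
each of which follows from the standard recursive decomposition of partitions fitting in a box. For identity (1) I would argue combinatorially. The right-hand side $\left[ {{n+m+1} \atop {m+1}} \right]_q$ generates partitions with at most $m+1$ parts, each of size at most $n$. Classifying such a partition by the value $j$ of its largest part, the remaining parts form a partition with at most $m$ parts each of size at most $j$, generated by $\left[ {{m+j} \atop m} \right]_q$ and contributing a factor $q^{j}$ for the removed largest part. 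Summing over $j$ from $0$ to $n$ reproduces the left-hand side exactly; equivalently, one telescopes the second form of $q$-Pascal applied to $\left[ {{n+m+1} \atop {m+1}} \right]_q$.

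For identity (2) I would induct on $B$, the base case $B=a$ being immediate since both sides equal $q^{2a}$. Writing $T(B)$ for the left-hand side, the step $T(B) = T(B-1) + q^{2B}\left[ {B \atop a} \right]_q$ together with the inductive hypothesis reduces the claim, after dividing through by $q^{2a}$, to the single $q$-binomial identity
$$\left[ {{B+2} \atop {a+2}} \right]_q - \left[ {{B+1} \atop {a+2}} \right]_q - q\left[ {{B+1} \atop {a+2}} \right]_q + q\left[ {B \atop {a+2}} \right]_q = q^{2(B-a)}\left[ {B \atop a} \right]_q.$$
This I would verify by successive applications of $q$-Pascal: the second form applied to $\left[ {{B+2} \atop {a+2}} \right]_q$ gives $\left[ {{B+2} \atop {a+2}} \right]_q - \left[ {{B+1} \atop {a+2}} \right]_q = q^{B-a}\left[ {{B+1} \atop {a+1}} \right]_q$, while the same form applied to $\left[ {{B+1} \atop {a+2}} \right]_q$ collapses the middle two terms to $-q^{B-a}\left[ {B \atop {a+1}} \right]_q$. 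The left side thus becomes $q^{B-a}\bigl(\left[ {{B+1} \atop {a+1}} \right]_q - \left[ {B \atop {a+1}} \right]_q\bigr)$, and a final application of the same recurrence to $\left[ {{B+1} \atop {a+1}} \right]_q$ extracts the factor $q^{B-a}\left[ {B \atop a} \right]_q$, yielding the desired $q^{2(B-a)}\left[ {B \atop a} \right]_q$.

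The main obstacle is purely the bookkeeping in this last step: of the two forms of $q$-Pascal, only the one carrying the weight $q^{n-k}$ on the corner-removed term produces the matching powers of $q$, so the recurrences must be applied in the correct form and order for the intermediate $\left[ {\cdot \atop {a+2}} \right]_q$ and $\left[ {\cdot \atop {a+1}} \right]_q$ terms to cancel and the exponents to align as $2(B-a)$. Once the reduction identity above is established, the induction closes mechanically, and together with identity (1) the lemma is complete.
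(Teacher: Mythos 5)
Your proof is correct, but for identity (2) it takes a genuinely different route from the paper. The paper does not prove identity (1) at all---it cites it as (3.3.9) of Andrews---whereas you supply the standard bijective proof by stripping the largest part $j$ from a partition in the $n \times (m+1)$ box; that is fine and self-contained. For identity (2), the paper argues combinatorially: it interprets $q^{2j+4}\left[ {j \atop a} \right]_q$ as the generating function for partitions in a $(j-a+2)\times(a+2)$ box containing a forced ``fat hook'' (two parts equal to $a+2$, all other parts at least $2$), strips the two forced largest parts, and recognizes the union over $j$ as the set of all partitions in the $(B-a)\times(a+2)$ box having no part equal to $1$; these are counted by $\left[ {{B+2} \atop {a+2}} \right]_q - q\left[ {{B+1} \atop {a+2}} \right]_q$, the subtracted term accounting for partitions obtained by appending a $1$ to a partition in the $(B-a-1)\times(a+2)$ box. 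Your induction on $B$ is equally valid: the base case $B=a$ gives $q^{2a}$ on both sides (using the paper's convention $\left[ {{a+1} \atop {a+2}} \right]_q = 0$), and I have checked that your three applications of the recurrence $\left[ {n \atop k} \right]_q = q^{n-k}\left[ {{n-1} \atop {k-1}} \right]_q + \left[ {{n-1} \atop k} \right]_q$ do telescope the reduction identity to $q^{2(B-a)}\left[ {B \atop a} \right]_q$ exactly as you describe. The trade-off is the usual one: your argument is mechanical and verifiable line by line but gives no hint of where the correction term $-q\left[ {{B+1} \atop {a+2}} \right]_q$ comes from, while the paper's partition-theoretic reading explains it as the exclusion of partitions possessing a part of size $1$.
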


The first identity is standard and may be found as (\cite{ToP}, (3.3.9)).  The second identity is almost certainly somewhere in the literature but, as it was not found in several standard textbooks, a short proof is here adduced.

\begin{proof}

A term $q^{2j+4} \left[ {j \atop a} \right]_q$ can be interpreted as the generating function for partitions into parts of size at most $a+2$, with at most $j-a+2$ parts, under the additional stipulation that two parts of size $a+2$ exist and all other parts are of size at least 2.  Visually, these are partitions in a bounding box of size $(j-a+2) \times (a+2)$ with a guaranteed ``fat hook'':

$$ \young(\blacksquare\blacksquare\blacksquare\blacksquare\blacksquare\blacksquare\blacksquare\blacksquare,\blacksquare\blacksquare\blacksquare\blacksquare\blacksquare\blacksquare\blacksquare\blacksquare,\blacksquare\blacksquare\hfil\hfil\hfil\hfil\hfil\hfil,\blacksquare\blacksquare\hfil\hfil\hfil\hfil\hfil\hfil,\blacksquare\blacksquare\hfil\hfil\hfil\hfil\hfil\hfil)$$

Here the black squares must be part of the partition, and the remaining squares may be occupied with any partition in the $6 \times 3$ box.

Rewrite the identity as 

$$\sum_{j=a}^B q^{2j} \left[ {j \atop a} \right]_q = q^{-4} \sum_{j=a}^B q^{2j+4} \left[ {j \atop a} \right]_q.$$

If we remove the two required largest parts of size $a+2$, we are left with partitions in a $(j-a) \times (a+2)$ box in which all parts must be of size at least 2:

$$ \young(\blacksquare\blacksquare\hfil\hfil\hfil\hfil\hfil\hfil,\blacksquare\blacksquare\hfil\hfil\hfil\hfil\hfil\hfil,\blacksquare\blacksquare\hfil\hfil\hfil\hfil\hfil\hfil).$$

As $j$ ranges from $a$ to $B$, the height of the box ranges from 0 to $B-a$.

Consider all partitions in the $(B-a) \times (a+2)$ box, which are counted by the generating function $\left[ {{B+2} \atop {a+2}} \right]_q$.  Such a partition is counted by our sum exactly once if its smallest part is at least 2; in this case the index $j$ at which the partition appears is the $j$ at which $j-a$ is the number of parts in the partition.

Partitions in the $(B-a) \times (a+2)$ box are not counted by our sum if they have a part of size 1.  We can count all such partitions by taking any partition in the $(B-a-1) \times (a+2)$ box and appending a 1; their generating function is $q \left[ {{B+1} \atop {a+2}} \right]_q$.

Hence the partitions counted by the sum are counted precisely by the difference between the two generating functions:

\begin{multline*}\sum_{j=a}^B q^{2j} \left[ {j \atop a} \right]_q = q^{-4} \sum_{j=a}^B q^{2j+4} \left[ {j \atop a} \right]_q = q^{-4} \left( q^{2a+4} \right) \left( \left[ {{B+2} \atop {a+2}} \right]_q - q \left[ {{B+1} \atop {a+2}} \right]_q \right) \\  = q^{2a} \left( \left[ {{B+2} \atop {a+2}} \right]_q - q \left[ {{B+1} \atop {a+2}} \right]_q \right)
\end{multline*}

\noindent and the lemma is proved.

\end{proof}

Proof of Theorem \ref{MainThm} is now simply expanding the recurrence, employing the summation identities above, and verifying the resulting equality.

\begin{align*}
f_{(n-k,k),i}(q) &= f_{(n-k-1,k),i}(q) + \sum_{k_0=i-1}^{k-1} q^{n-k+k_0}f_{(n-k-1,k_0),i-1}(q) \\
&= \frac{q^{k+i^2-i}(1-q^{n-2k})}{1-q^i} \left[ {{k-1} \atop {i-1}} \right]_q \left[ {{n-k-1} \atop {i-1}} \right]_q \\ &+ \sum_{k_0 = i-1}^{k-1} q^{n-k+k_0} \frac{q^{k_0+i^2-3i+2}(1-q^{n-k-k_0})}{1-q^{i-1}} \left[ {{k_0-1} \atop {i-2}} \right]_q \left[ {{n-k-1} \atop {i-2}} \right]_q \\
&= \frac{q^{k+i^2-i}(1-q^{n-2k})}{1-q^i} \left[ {{k-1} \atop {i-1}} \right]_q \left[ {{n-k-1} \atop {i-1}} \right]_q \\ 
&+ \frac{q^{n-k+i^2-3i+2}}{1-q^{i-1}} \left[ {{n-k-1} \atop {i-2}} \right]_q \sum_{k_0=i-1}^{k-1} \left( q^{2k_0} - q^{n-k+k_0} \right) \left[ {{k_0-1} \atop {i-2}} \right]_q \\
&= \frac{q^{k+i^2-i}(1-q^{n-2k})}{1-q^i} \left[ {{k-1} \atop {i-1}} \right]_q \left[ {{n-k-1} \atop {i-1}} \right]_q \\
&+ \frac{q^{n-k+i^2-3i+2}}{1-q^{i-1}} \left[ {{n-k-1} \atop {i-2}} \right]_q \left( q^2 \left( \sum_{j_1 = i-2}^{k-2} q^{2j_1} \left[ {{j_1} \atop {i-2}} \right]_q \right) \right. \\
&+ \left. \left( q^{n-k+i}  \sum_{j_2 = 0}^{k-i} q^{j_2} \left[ {{(i-2)+j_2} \atop {i-2}} \right]_q \right) \right)
\end{align*}

In the last equality we made the substitutions $j_1 = k_0 - 1$ and $j_2 = k_0 - i + 1$.

Now employing the $q$-binomial summation identities of the previous lemma, we obtain

\begin{align*}
f_{(n-k,k),i}(q) &= \frac{q^{k+i^2-i}(1-q^{n-2k})}{1-q^i} \left[ {{k-1} \atop {i-1}} \right]_q \left[ {{n-k-1} \atop {i-1}} \right]_q \\
&+ \frac{q^{n-k+i^2-3i+2}}{1-q^{i-1}} \left[ {{n-k-1} \atop {i-2}} \right]_q \left( q^2 \left( q^{2i-4} \left[ {k \atop i} \right]_q - q^{2i-3} \left[ {{k-1} \atop i} \right]_q \right) \right. \\ 
&- \left. q^{n-k+i-1} \left[ {{k-1} \atop {i-1}} \right]_q \right).
\end{align*}

Set this final sum conjecturally equal to $\frac{q^{k+i^2-i}(1-q^{n-2k+1})}{1-q^i} \left[ {{k-1} \atop {i-1}} \right]_q \left[ {{n-k} \atop {i-1}} \right]_q$ and multiply through the claimed equality by $$ \frac{(q)_i (q)_{i-1} (q)_{k-i+1} (q)_{n-k-i+1}}{q^{k+i^2-i} (q)_{k-1} (q)_{n-k-1}} .$$ We obtain that the conjecture is equivalent to the truth of the equation

\begin{multline*}(1-q^{k-i+1})(1-q^{n-k})(1-q^{n-2k+1}) = \\
(1-q^{n-2k})(1-q^{k-i+1})(1-q^{n-k-i+1}) + q^{n-2k} (1-q^{k-i+1})(1-q^k) \\ 
- q^{n-2k+1}(1-q^{k-i})(1-q^{k-i+1}) - q^{2n-3k-i+1}(1-q^i)(1-q^{k-i+1}).
\end{multline*}

Expansion and cancellation verifies the identity, and the theorem is proved.

\hfill $\Box$

\subsection{Proof of Theorem \ref{TwoThm}}

Having observed that unimodality and equicentrism of the polynomials $f_{(n-k,k),i} (q)$ implies unimodality of $A_{n,i}(q)$, in this section we show these facts for $f_{(n-k,k),2}(q)$.  

The formula in this case simplifies to

$$f_{(n-k,k),2}(q) = q^{k+2}\frac{1}{1+q}(1+q+\dots+q^{k-2})(1+\dots+q^{n-k-1})(1+\dots+q^{n-2k}).$$

If $j$ is odd, then $\frac{1}{1+q}(1+q+\dots+q^j) = 1+q^2+q^4+\dots+q^{j-1}$.  We have three cases: $k$ is odd, $k$ and $n$ are both even and thus the second factor has odd final exponent, or $k$ is even and $n$ is odd and the third final exponent is odd.

If $k$ is odd, we wish to establish the unimodality of the product

\begin{multline*}(1+q^2+q^4+\dots+q^{k-3})(1+\dots+q^{n-k-1})(1+\dots+q^{n-2k}) \\
= (1+q^2+q^4+\dots+q^{k-3})\left(1+2q+3q^2+\dots+(n-2k+1)q^{n-2k}+(n-2k+1)q^{n-2k+1} \right. \\
\left. + \dots + (n-2k+1)q^{n-k-1} + (n-2k)q^{n-k} + \dots +3q^{2n-3k-3}+2q^{2n-3k-2} + q^{2n-3k-1}\right).
\end{multline*}

The coefficients of the second factor increase from the $q^0$ term to the $q^{n-2k}$ term, stay constant until the $q^{n-k-1}$ term, and thereafter decrease to the $q^{2n-3k-1}$ term.  The whole product consists of overlapping copies of this latter factor.  But between $q^{n-2k}$ and $q^{n-k-1}$ there are $k$ terms inclusive, and thus the falling portions of shifting copies of the second factor never begin before the rising portions are done.  Thus, the product is unimodal.  (In fact we could give it explicitly if we wished.)

In all of these cases, we wish to establish the unimodality of a product 
\begin{multline*}(1+q^2+q^4+\dots+q^{j-1}) \\ (1+2q+3q^2+\dots+mq^a+mq^{a+1}+\dots+mq^b+(m-1)q^{b+1}+\dots+q^c).
\end{multline*}

While for other values of $n$ and $k$ it is no longer necessarily case that the rising portions of copies of the latter factor terminate before any falling portions begin, it is clearly the case that in the sequence of coefficients there is first a segment where there are a number of rising copies greater than the number of falling copies (and hence the sequence is increasing), followed by a segment in which the number of rising and falling copies are equal, followed by a segment in which the number of falling copies is greater than the number of rising copies.  The sequences of coefficients rise and fall at a rate of precisely 1 per step, and so knowing the numbers of rising and falling copies gives the rate of increase or decrease of the sum.

Thus, the sequence of coefficients in the product is unimodal, and from the arguments above it can also be seen that they are symmetric.  In any of the three cases, the polynomial has minimum degree $k+2$ and maximum degree $2n-k-2$, and hence its peak terms occur at (and symmetrically around) the term $q^{n}$. Their sum is thus likewise a symmetric, unimodal polynomial around this peak, and Theorem \ref{TwoThm} is proved. 

\hfill $\Box$

\section{Observations on related polynomials}

In this section we include a few remarks on the polynomials related to other patterns.

For the remainder of this section denote by $$F_{\tau,n} = \sum_{\sigma \in {\mathcal{S}}_n(\tau)} q^{maj \, \sigma} t^{des \, \sigma} =: \sum_{i=0}^{n-1} t^i g_{\tau,n,i} (q).$$

\subsection{123- and 321-avoiding permutations}

The reversal $r(\sigma)$ of a 321-avoiding permutation $\sigma$ of length $n$ with $i$ descents and $maj(\sigma) = M$ is a 123-avoiding permutation with $n-1-i$ descents, $maj(r(\sigma)) = n(n-1)/2 - ni + M$.  Hence $$g_{123,n,i}(q) = A_{n,n-1-i}(q) \cdot q^{n(n-1)/2-ni}$$ and the $g_{123,n,i}(q)$ are thus also symmetric and (where proven, i.e. at least for $i=0,1,2$ so far) unimodal.  Indeed, since for $n$ odd the maximum number of descents in a 321-avoiding permutation of length $n$ is $(n-1)/2$, which is also the minimum number of descents in a 123-avoiding permutation of length $n$, the $q$-power factor is $q^0$ and the two relevant terms are exactly the same.

An example of these behaviors for $n=5$ is below.

\begin{align*}
F_{321,5} &= 1 + (4q+9q^2+9q^3+4q^4)t + (5q^4+5q^5+5q^6)t^2 \\
F_{123,5} &= (5q^4+5q^5+5q^6)t^2+(4q^6+9q^7+9q^8+4q^9)t^3 + q^{10} t^4
\end{align*}

One notes that the last coefficient in $F_{321,5}$ is not only unimodal but in fact constant in its coefficients.  Let $C_j = \frac{1}{j+1}\binom{2j}{j}$ be the $j$-th Catalan number. By the Robinson-Schensted correspondence we have the following theorem.

\begin{theorem} $$g_{321,n,\lfloor n/2 \rfloor} = \left\{ \begin{array}{lr} C_{\lfloor n/2 \rfloor} \cdot q^{\lfloor n/2 \rfloor^2 + \lfloor n/2 \rfloor} & $n$ \, \text{ even} \\ \frac{4 \lfloor n/2 \rfloor + 2}{\lfloor n/2 \rfloor + 2} C_{\lfloor n/2 \rfloor} \cdot \left( q^{\lfloor n/2 \rfloor^2} + \dots + q^{\lfloor n/2 \rfloor^2 + \lfloor n/2 \rfloor}\right) & $n$ \, \text{ odd.} \end{array} \right.$$
\end{theorem}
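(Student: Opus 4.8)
The plan is to read this coefficient straight off the formula already assembled in Lemma \ref{FRTLemma} and Theorem \ref{MainThm}, noting first that $g_{321,n,\lfloor n/2 \rfloor}(q)$ is exactly $A_{n,\lfloor n/2 \rfloor}(q)$. The crucial simplification is that the sum in Lemma \ref{FRTLemma},
$$A_{n,i}(q) = \sum_{k=i}^{\lfloor n/2 \rfloor} f_{(n-k,k),i}(q)\binom{n}{k}\frac{n-2k+1}{n-k+1},$$
runs over $k$ from $i$ up to $\lfloor n/2 \rfloor$, so when $i=\lfloor n/2 \rfloor$ it collapses to the single term $k=\lfloor n/2 \rfloor$. (Equivalently, the factor $\left[ {{k-1} \atop {i-1}} \right]_q$ in Theorem \ref{MainThm} vanishes unless $i \leq k$, so the maximal descent number forces the near-balanced two-row shape $(\lceil n/2 \rceil, \lfloor n/2 \rfloor)$ under Robinson--Schensted.)

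First I would substitute $i=k$ into the formula of Theorem \ref{MainThm}. Since $\left[ {{k-1} \atop {k-1}} \right]_q = 1$, the tableau generating function reduces immediately to
$$f_{(n-k,k),k}(q) = \frac{q^{k^2}(1-q^{n-2k+1})}{1-q^k}\left[ {{n-k} \atop {k-1}} \right]_q,$$
and I would then split on the parity of $n$, the only place the two cases diverge, since $n-2k+1$ equals $1$ when $n=2k$ and $2$ when $n=2k+1$. In the even case $n=2k$ I expect $\left[ {{k} \atop {k-1}} \right]_q = \left[ {{k} \atop {1}} \right]_q = (1-q^k)/(1-q)$ to cancel the remaining factors, leaving $f_{(k,k),k}(q)=q^{k^2}$; the multiplicity $\binom{2k}{k}\frac{1}{k+1}$ is exactly $C_k$, so the whole coefficient is $C_k q^{k^2}$. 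This already pins the unique nonzero power at $q^{\lfloor n/2 \rfloor^2}$, which is the common value of the minimum degree $i^2$ and the maximum degree $ni-i^2$ at $i=k$; I would double-check the stated even-case exponent against this.

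In the odd case $n=2k+1$ I would expand $\left[ {{k+1} \atop {k-1}} \right]_q = \left[ {{k+1} \atop {2}} \right]_q = \frac{(1-q^{k+1})(1-q^k)}{(1-q)(1-q^2)}$ and cancel the numerator's factor $1-q^2$ against the denominator, obtaining the palindrome $f_{(k+1,k),k}(q)=q^{k^2}(1+q+\cdots+q^k)$ of $k+1$ unit coefficients. The last step is to rewrite the multiplicity: using $\binom{2k+1}{k}=\frac{2k+1}{k+1}\binom{2k}{k}$, the factor $\binom{2k+1}{k}\frac{2}{k+2}$ becomes $\frac{2(2k+1)}{(k+2)(k+1)}\binom{2k}{k}=\frac{4k+2}{k+2}C_k$, matching the claimed constant. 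The computations are short; the only real care is in this odd-case $q$-binomial cancellation and in verifying the Catalan identity $\binom{2k+1}{k}\frac{2}{k+2}=\frac{4k+2}{k+2}C_k$, which is the main obstacle in the sense that it is where an error would most easily hide.

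A cleaner but more labor-intensive alternative, which I would at least mention since it is the route hinted at by the phrase preceding the theorem, is purely combinatorial. One checks directly that the SYT of shape $(\lceil n/2 \rceil,\lfloor n/2 \rfloor)$ achieving $\lfloor n/2 \rfloor$ descents are exactly the (near-)column-filled tableaux $\begin{smallmatrix}1&3&5&\cdots\\2&4&6&\cdots\end{smallmatrix}$, whose major indices are $k^2$ in the even case (a unique tableau) and $k^2,k^2+1,\dots,k^2+k$ in the odd case ($k+1$ tableaux), and then multiplies by the ballot number $SYT(n-k,k)=\frac{n-2k+1}{n-k+1}\binom{n}{k}$. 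Here the obstacle shifts to the characterization of maximal-descent tableaux and the evaluation of their major-index distribution, after which the same Catalan simplification finishes the proof.
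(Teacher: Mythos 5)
Your proof is correct, but it takes a genuinely different route from the paper's. The paper argues combinatorially: it identifies the maximal-descent recording tableaux directly (the unique alternating tableau $\young(135\dots,246\dots)$ when $n$ is even, and for $n$ odd the $\lfloor n/2\rfloor+1$ tableaux obtained by advancing one extra step in the top row before a single descent, whose major indices sweep out $\lfloor n/2\rfloor^2$ through $\lfloor n/2\rfloor^2+\lfloor n/2\rfloor$), then multiplies by the Frame--Robinson--Thrall count --- exactly the ``labor-intensive alternative'' you sketch in your last paragraph. Your primary route instead specializes Theorem \ref{MainThm} and Lemma \ref{FRTLemma} at $i=k=\lfloor n/2\rfloor$, where the sum collapses to one term; your $q$-binomial cancellations and the Catalan identity $\binom{2k+1}{k}\tfrac{2}{k+2}=\tfrac{4k+2}{k+2}C_k$ all check out. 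What the algebraic route buys is brevity and a built-in consistency check on the main formula; what the paper's route buys is an explicit description of the extremal tableaux, independent of the (inductively proved) main theorem. One substantive point in your favor: your even-case answer $C_{\lfloor n/2\rfloor}q^{\lfloor n/2\rfloor^2}$ disagrees with the exponent $\lfloor n/2\rfloor^2+\lfloor n/2\rfloor$ printed in the theorem statement, and you are right to flag it --- for $n=2k$ the tableau $\young(135\dots,246\dots)$ has descent set $\{1,3,\dots,2k-1\}$ and major index $k^2$, and the paper's own symmetry discussion forces $A_{2k,k}$ to be concentrated at degree $ni/2=k^2$ (e.g.\ $A_{4,2}(q)=2q^4$). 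The stated exponent is a typo in the paper, not an error in your argument.
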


\begin{proof} If $n$ is even, the only possible recording tableau with $n/2$ descents is $$\young(135{{\dots}},246{{\dots}}).$$  The number of SYT in the $2 \times j$ rectangle, calculable from the Frame-Robinson-Thrall formula, is well-known to be $C_j$. (Indeed the fact that 321-avoiding alternating permutations of even length are counted by the Catalan numbers is problem 146 in Stanley's famous Catalan number collection \cite{catadd}.) 

If $n$ is odd, the only possible recording tableaux consist of varying $$\young(1357{\,{\dots}\,}n,246{{\dots}}{n-1\,}:)$$ by proceeding one step further in the top row before any one descent:

$$\young(12468{{\dots}},357{{\dots}}{n}:) \quad , \quad \young(13468{{\dots}},257{{\dots}}{n}:) \quad , \quad \young(13568{{\dots}},247{{\dots}}{n}:) , \quad \dots$$

The major indices of these tableaux range from $\lfloor n/2 \rfloor^2$ to $\lfloor n/2 \rfloor^2 + \lfloor n/2 \rfloor$, and each has the same number of permutations associated with it as recording tableau; Frame-Robinson Thrall again gives the claimed constant.
\end{proof}

\subsection{132-, 231-, 312-, and 213-avoiding permutations}

The patterns 132, 231, 312, and 213 are related by reverse, complement, and reverse-complement. The effect of reverse was described above. Complementation sends a permutation of length $n$ with $i$ descents and major index $M$ to one with $n-1-i$ descents and major index $\binom{n}{2} - M$.  

Let $f(q)$ be a polynomial of degree at most $J$.  Denote the \emph{reversal} of $f(q) = a_0 + \dots + a_Jq^J$ within the interval $\left[ 0, J \right]$ by $rev_J(f(q)) = a_J + \dots + a_0 q^J$. From the foregoing facts we have the following relationships.

\begin{align*} F_{132,n} &= g_{132,n,0} + g_{132,n,1}t + \dots + g_{132,n,n-1}t^{n-1} \\
F_{231,n} &= g_{132,n,n-1}q^{\binom{n}{2} -n(n-1)} + g_{132,n,n-2}q^{\binom{n}{2}-n(n-2)}t + \dots + g_{132,n,0}q^{\binom{n}{2} -n(0)}t^{n-1} \\
F_{213,n} &= rev_{\binom{n}{2}} (g_{132,n,n-1}) + rev_{\binom{n}{2}} (g_{132,n,n-2}) t + \dots rev_{\binom{n}{2}} (q_{132,n,0}) t^{n-1} \\
F_{312,n} &= rev_{0}(g_{132,n,0}) + rev_{n}(g_{132,n,1}) t + \dots + rev_{n(n-1)}(g_{132,n,n-1})t^{n-1}
\end{align*}

A standard bijection (recursively switch the values of the portions of the permutation before and after element $n$) associates to any 132-avoiding permutation a 231-avoiding permutation with the same descent set, and hence from the first two lines of the previous equations we have $$g_{132,n,i} = g_{132,n,n-1-i}q^{\binom{n}{2}-n(n-1-i)}.$$ Thus, the $q$-polynomial coefficients of $t^i$ and $t^{n-1-i}$ in $F_{132}$ are almost equal, except for shift by a power of $q$.  

Hence we have in fact $F_{132,n} = F_{231,n}$, $F_{213,n} = F_{312,n}$, and $F_{213,n}$ is the same as $F_{132,n}$ except that each $g_{132,n,i}$ has its sequence of coefficients reversed.  The examples of this behavior for $n=5$ are exhibited below.

\begin{align*}
F_{132,5} &= 1+(4q+3q^2+2q^3+q^4)t+(6q^3+5q^4+6q^5+2q^6+q^7)t^2 \\ &+(4q^6+3q^7+2q^8+q^9)t^3 + q^{10}t^4 \\
F_{231,5} &= 1+(4q+3q^2+2q^3+q^4)t+(6q^3+5q^4+6q^5+2q^6+q^7)t^2 \\ &+(4q^6+3q^7+2q^8+q^9)t^3 + q^{10}t^4 \\
F_{213,5} &= 1+(q+2q^2+3q^3+4q^4)t+(q^3+2q^4+6q^5+5q^6+6q^7)t^2 \\ &+(q^6+2q^7+3q^8+4q^9)t^3 + q^{10}t^4 \\
F_{312,5} &= 1+(q+2q^2+3q^3+4q^4)t+(q^3+2q^4+6q^5+5q^6+6q^7)t^2 \\ &+(q^6+2q^7+3q^8+4q^9)t^3 + q^{10}t^4 \\
\end{align*}

Thus, we may learn about any of these polynomials by focusing on one, and hence for the remainder of this section we will consider 132-avoiding polynomials.  For these, we have the opposite of our results for 321: it is \emph{always} the case that we have non-symmetry and non-unimodality for at least some of the $g_{132,n,i}$, which we formalize below.

\begin{theorem} For $n\geq 3$, $g_{132,n,1}$ is always non-symmetric, and for $n \geq 5$, $g_{132,n,2}$ is always non-unimodal, due to the coefficients being given by the formula: \begin{multline*}F_{132,n} = 1 + \left( \sum_{i=1}^{n-1} (n-i) q^i \right) t \\ + \left( \binom{n-1}{2} q^3 + \left( \binom{n-1}{2} - 1 \right) q^4 + (n^2-4n+1) q^5 + \dots \right) t^2 + \dots .\end{multline*}
\end{theorem}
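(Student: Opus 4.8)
The plan is to compute the relevant coefficients directly from the recursive structure of $132$-avoiding permutations. Recall that if $\sigma \in \mathcal{S}_n(132)$ has its maximal entry $n$ in position $p$, then the entries before position $p$ are exactly the values $\{n-p+1,\dots,n-1\}$ and those after are exactly $\{1,\dots,n-p\}$, each block forming a $132$-avoiding permutation in its own right; moreover $\sigma$ avoids $132$ if and only if both blocks do, since the prefix values all exceed the suffix values and $n$ itself can never play the role of the ``$1$'' or the ``$2$'' of a forbidden pattern. I would first dispose of the constant term $g_{132,n,0}=1$ (the identity is the only descent-free permutation) and then establish the two displayed rows in turn.

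For the coefficient of $t$ I would characterize the $\mathcal{S}_n(132)$ permutations with a single descent at position $d$: these are the two-run permutations $\sigma_1<\dots<\sigma_d>\sigma_{d+1}<\dots<\sigma_n$. A short argument shows such a permutation avoids $132$ exactly when the value set of the first run is a block of consecutive integers $\{m+1,\dots,m+d\}$, since otherwise some entry of the second run lies strictly between two entries of the first and produces a $132$ pattern. Requiring a genuine descent rules out $m=0$, leaving $n-d$ choices, so $g_{132,n,1}(q)=\sum_{i=1}^{n-1}(n-i)q^i$. Its coefficients decrease strictly from $n-1$ down to $1$, so the polynomial is palindromic only when $n-1=1$; for $n\ge 3$ it is therefore non-symmetric (though, incidentally, still unimodal).

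For the coefficient of $t^2$ I would use that a permutation with exactly two descents at positions $d_1<d_2$ has $\mathrm{maj}=d_1+d_2$, so the terms $q^3$, $q^4$, $q^5$ record respectively the $132$-avoiding permutations with descent set $\{1,2\}$, $\{1,3\}$, and ($\{1,4\}$ or $\{2,3\}$). For each target descent set $S$ I would peel off the maximum: the entry $n$ may sit only in a position $p$ whose preceding step is an ascent and whose following step is a descent (or an endpoint), and each admissible $p$ factors the count as $N_{p-1}(S_P)\,N_{n-p}(S_Q)$, where $S_P, S_Q$ are the induced descent sets on the two shorter blocks and $N_{\cdot}$ denotes the corresponding $132$-avoiding count (an empty block counting as $1$). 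Since $S_P$ and $S_Q$ each have strictly fewer descents, these reduce either to the one-descent counts of the previous step or to the same two-element count at length $n-1$, producing first-order recurrences such as $N_n(\{1,2\})=N_{n-1}(\{1,2\})+(n-2)$. Solving these from the base cases at $n=3,4,5$ yields $N_n(\{1,2\})=\binom{n-1}{2}$, $N_n(\{1,3\})=\binom{n-1}{2}-1$, $N_n(\{1,4\})=\binom{n-1}{2}-3$, and $N_n(\{2,3\})=\binom{n-2}{2}$; summing the last two gives $(n-2)^2-3=n^2-4n+1$, as claimed.

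Reading off the three coefficients, I would finish by observing that $c_3-c_4=1>0$ while $c_5-c_4=\tfrac12(n^2-5n+2)>0$ for all $n\ge 5$, so the coefficient sequence satisfies $c_3>c_4<c_5$ and exhibits a strict local minimum at $q^4$; this is incompatible with unimodality and proves the claim for $n\ge 5$. I expect the genuine work to lie in the third paragraph: correctly listing, for each prescribed two-element descent set, the admissible positions of $n$, and verifying that the interior-peak position collapses to the single rigid factorization claimed, together with pinning down the base cases. The two-run lemma and the recurrence bookkeeping are routine once the decomposition is in place, so this case analysis for the interior peak is the main obstacle.
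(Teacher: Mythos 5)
Your proposal is correct, and the counts all agree with the paper's ($\binom{n-4}{2}+3n-12+\binom{n-2}{2}$ in the paper equals your $\bigl(\tbinom{n-1}{2}-3\bigr)+\tbinom{n-2}{2}=(n-2)^2-3=n^2-4n+1$), but you reach them by a genuinely different route for the two-descent coefficients. The paper argues directly: for each descent set it chooses the values occupying the descent tops and shows the remaining entries are forced by $132$-avoidance, reading off each count in closed form at once. You instead invoke the standard recursive decomposition at the position of $n$ (prefix values all exceed suffix values, each block independently $132$-avoiding), which turns each fixed descent set into a first-order recurrence such as $N_n(\{1,2\})=N_{n-1}(\{1,2\})+(n-2)$ that you then solve from base cases; your recurrences and solutions check out, since for each two-element descent set the maximum can only sit at an admissible peak position, collapsing the sum to one or two terms. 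The paper's method is shorter once the forcing arguments are believed; yours is more systematic and would extend mechanically to higher major index or more descents, at the cost of some bookkeeping to enumerate the admissible positions of $n$ for each descent set, which you rightly identify as the main remaining work. Your one-descent argument and the non-symmetry conclusion are essentially identical to the paper's. One small point in your favor: your final inequality $c_5-c_4=\tfrac12(n^2-5n+2)>0$ correctly requires $n\ge 5$ (it fails at $n=4$), whereas the paper's closing line asserts the comparison ``for $n\ge 3$,'' a slip relative to its own theorem statement.
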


\begin{proof}

For one descent, fix a place $i$ for the only descent; the entries before and after $i$ must ascend. Once we select the values of the entries prior to $i$, the permutation is determined, since these values in ascending order are placed up to the $i$-th place and the remaining values are placed in ascending order afterward.

$$45 \vert 123 \,\,\, 6789$$

In order that the permutation contain no 132 pattern, no entry in the latter portion of the permutation can appear between two values chosen in the former portion; thus, the values prior to place $i$ must be an interval of length $i$.  There are $n-i$ such intervals that can be chosen (the latter portion must contain 1).

For two descents, we take each major index separately.

\noindent \textbf{Case 1:} $maj(\sigma) = 3$.  Descents must occur at places 1 and 2.  Choose 2 values from $\{2,\dots,n\}$ and place them in descending order in places 1 and 2.  Place the remaining values in ascending order.  There are $\binom{n-1}{2}$ such permutations.

\noindent \textbf{Case 2:} $maj(\sigma) = 4$.  Descents must occur at places 1 and 3.  Choose values $(a,b)$, $a > b$, for places 1 and 2 from among $\{2, \dots , n\}$, with the exception of the pair $(n,n-1)$.  Place them in descending order.  The entry in place 3 must be $b+1$, or $b+2$ if $a=b+1$, in order to avoid a 132 pattern.  There are $\binom{n-1}{2} - 1$ such permutations.

\noindent \textbf{Case 3:} $maj(\sigma) = 5$.  Either descents occur at places 1 and 4, or they occur at places 2 and 3.  

For descents at places 1 and 4, one of two schemes is used.  We may choose $(a,b)$, $a > b$, for the first two places from among $\{2, \dots , n-3\}$. We require places 3 and 4 to be $b+1$ and $b+2$, unless $a$ is already one of these values, in which case we need $b+3$.  The remaining entries are arranged in ascending order.  There are $\binom{n-4}{2}$ such permutations.  We can additionally choose $a$ from among $\{n,n-1,n-2\}$, though not $b$.  There are $3n-12$ such permutations.

For descents at places 2 and 3, we simply need to choose values $(a,b)$, $a > b$, for places 1 and 3 from among $\{2,\dots,n-1\}$ and arrange these in descending order; the value at place 2 must be $a+1$.  There are $\binom{n-2}{2}$ such permutations.

Totaled, there are then $\binom{n-4}{2} + 3n-12 + \binom{n-2}{2} = n^2-4n+1$ permutations avoiding 132 with major index 5, as claimed.

Since $n^2-4n+1 > \binom{n-1}{2} - 1$ for $n \geq 3$, the theorem is proved.

\end{proof}

\section{Future directions}

\subsection{The main conjecture}

The main conjecture of Cheng et al. remains open.  It is easy to observe that for all $i$, the polynomials $f_{(n-k,k),i}(q)$ are symmetric: consider the form $$f_{(n-k,k),i} = \frac{q^{k+i^2-i}(1+q+q^2+\dots+q^{n-2k})}{1+q+q^2+\dots+q^{i-1}} \left[ { {k-1} \atop {i-1} } \right]_q \left[ { {n-k} \atop {i-1} } \right]_q.$$ The numerator (the expression other than the term $1+q+\dots+q^{i-1}$) is the product of symmetric polynomials and hence symmetric, and since we know the quotient is also polynomial, the numerator is divisible by $1+q+\dots+q^{i-1}$.  The quotient of a symmetric polynomial by $1+q+\dots+q^{i-1}$ is also symmetric (for contradiction, if the quotient were not symmetric, take the outermost pair of nonequal coefficients in symmetric positions and observe the value of the coefficients of the product at the same distance from the ends).

The $f_{(n-k,k),i}(q)$ likewise have mean degree $ni/2$, as they have minimal degree $k+i^2-i$ and maximal degree $ni-k-i^2+i$: the largest term of $\left[ {{M+N} \atop N}\right]_q$ is $q^{MN}$, counting the partition filling the box.

Thus to prove the full conjecture it remains to determine the unimodality of the expressions individually.  The $q$-binomial coefficients are unimodal; the product of symmetric unimodal polynomials is unimodal; the final step is an exploration of the effect of division by $1+q+\dots+q^{i-1}$.

An approach not taken in this article could consider a different route.  The distribution of the major index of two-rowed standard Young tableaux of shape $(n-k,k)$ with a fixed number of descents is equivalent to the distribution of the sums of place values of peaks in lattice paths that stay above the main diagonal in the $(n-k) \times k$ box:

\begin{center}
\begin{tabular}{c}
\begin{tikzpicture}
\draw (0,0) grid (10,-4);
\draw (0,0) -- (4,-4);
\draw[very thick] (0,0) -- (2,0) -- (2,-1) -- (3,-1) -- (3,-3) -- (9,-3) -- (9,-4) -- (10,-4);
\end{tikzpicture}
\end{tabular}
\end{center}

This path would correspond to the SYT $$\young(124789{{10}}{{11}}{{12}}{{14}},356{{13}}).$$ Peaks occur after steps 2, 4, and 12, the sum of which is the major index.  Perhaps lattice paths in a given size of bounding grid with a fixed number of peaks are susceptible to the construction of a symmetric chain decomposition, a standard combinatorial tool for proving unimodality.

\noindent \textbf{Remark:} The $q$-binomial coefficients $\left[ {{M+j} \atop j} \right]_q$ have relatively simple descriptions for $j=2,3,4$, since there are known symmetric chain decompositions for these generating functions.  By analysis of explicit forms of the products involved, a diligent student could probably prove unimodality for $A_{n,i}(q)$ for $i=3,4,5$.

\subsection{The Stanley hook formula}

If $\lambda \vdash n$ and we ignore the number of descents, the distribution of the major index over $SYT(\lambda)$  is known (\cite{EC2}, Corollary 7.21.5): 

\begin{theorem} Denote $b(\lambda) = \sum (i-1)\lambda_i$.  Then $$\sum_{T \in SYT(\lambda)} q^{maj(T)} = \frac{q^{b(\lambda)} (q)_n}{\prod (1-q^{h_{ij}})}.$$
\end{theorem}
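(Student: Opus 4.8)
The plan is to recognize the left-hand side as the numerator in Stanley's generating function for $(P,\omega)$-partitions, and then to evaluate the companion $P$-partition sum as a principal specialization of a Schur function, which factors over the hook lengths. This is exactly the route by which the cited corollary is obtained, and it converts the problem into two standard pieces glued by the theory of linear extensions.

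First I would set up the poset $P_\lambda$ whose elements are the cells of the Ferrers diagram of $\lambda$, ordered by $(i,j) \preceq (i',j')$ iff $i \le i'$ and $j \le j'$. Its linear extensions --- the fillings by $1,\dots,n$ that increase along rows and down columns --- are precisely the elements of $SYT(\lambda)$. Fixing a natural labeling $\omega$ of the cells, Stanley's fundamental theorem on $(P,\omega)$-partitions yields
$$\sum_{\sigma} q^{|\sigma|} = \frac{\sum_{T \in SYT(\lambda)} q^{\mathrm{maj}(T)}}{(q)_n},$$
provided one checks that the descent set that $\omega$ induces on each linear extension coincides with the descent set of the associated tableau in the sense used in Section 2 (namely those $i$ for which $i+1$ lies in a strictly lower row). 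The denominator $(1-q)(1-q^2)\cdots(1-q^n)=(q)_n$ is automatic from the theorem.

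Next I would identify the left-hand sum as a Schur specialization. After the standard dictionary, the $(P_\lambda,\omega)$-partitions are the semistandard tableaux (equivalently the column-strict plane partitions) of shape $\lambda$, which are exactly the monomials appearing in $s_\lambda$ under the principal specialization $x_i \mapsto q^{i-1}$. Hence $\sum_\sigma q^{|\sigma|} = s_\lambda(1,q,q^2,\dots)$, and the known product evaluation
$$s_\lambda(1,q,q^2,\dots) = \frac{q^{b(\lambda)}}{\prod (1-q^{h_{ij}})}$$
--- obtained from the hook-content formula by sending the number of variables to infinity --- gives the theorem after multiplying through by $(q)_n$. As a sanity check, setting $q \to 1$ collapses the right-hand side to $n!/\prod h_{ij}$, recovering the Frame--Robinson--Thrall count already used in Lemma \ref{FRTLemma}.

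The main obstacle --- really the only subtle point --- is the bookkeeping in the first step: choosing $\omega$ so that the $(P,\omega)$-descents of a linear extension agree with the tableau descents used here, and confirming that the resulting statistic is $\mathrm{maj}(T)$ on the nose, with no spurious power-of-$q$ shift, so that the factor $q^{b(\lambda)}$ in the answer comes entirely from the Schur specialization. An alternative, more self-contained route would bypass $P$-partition theory and instead establish the Schur principal-specialization product formula directly, for instance by applying the Lindström--Gessel--Viennot lemma to the Jacobi--Trudi determinant $\det\bigl(h_{\lambda_i - i + j}\bigr)$ and reading off the $\prod(1-q^{h_{ij}})^{-1}$; but even then the descent/major-index correspondence of the first step still has to be verified combinatorially, so it remains the crux.
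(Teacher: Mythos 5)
Your outline is correct, and it is essentially the proof the paper points to: the statement is quoted verbatim from Stanley (EC2, Corollary 7.21.5) and the paper offers no proof of its own beyond that citation, while your route --- $\sum_T q^{\mathrm{maj}(T)} = (q)_n\, s_\lambda(1,q,q^2,\dots)$ via $(P,\omega)$-partition/quasisymmetric expansion, combined with the principal specialization $s_\lambda(1,q,q^2,\dots) = q^{b(\lambda)}/\prod(1-q^{h_{ij}})$ from the hook-content formula --- is exactly how the cited corollary is derived there. The labeling/descent bookkeeping you flag is indeed the only point requiring care, and it works out with the standard choice of natural labeling.
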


Observe that this is a $q$-analogue of the Frame-Robinson-Thrall formula given earlier.  Our formula then refines this formula by descent number for the case of two-rowed partitions.  We note that the binomial formula for which the main formula of this paper is a $q$-analogue can itself be found in the literature \cite{Barahovski}.

A natural question is whether similar formulae can be found for partitions of other shapes.  This could be motivated by pattern avoidance questions, in which we would be interested in the question of whether the distribution of the major index of tableaux with given shapes and specified numbers of descents is itself unimodal.  However, it is certainly of interest in its own right.

For example, in the case of the class of partitions $\lambda = (m,k,1)$, we can state 

\begin{theorem} $f_{(m,k,1),i}(q) = q^{k+i^2-2i+2} \frac{(1-q^{m-k+1})(1-q^{i-1})}{(1-q^i)(1-q)}\left[{k \atop {i-1}} \right]_q \left[ {{m+1} \atop {i-1}} \right]_q.$
\end{theorem}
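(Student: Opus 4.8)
The plan is to follow the template of Theorem~\ref{MainThm}: establish a base case, derive a recurrence that lowers either the shape or the descent number, sum the recurrence using the two $q$-binomial summation identities established above, and verify the resulting rational identity by clearing denominators. Because the first column has length three, every such tableau has at least two descents, so $f_{(m,k,1),1}(q)=0$ (consistent with the factor $1-q^{i-1}$ in the claimed formula); the induction therefore runs on $i$ with $f_{(m,k,1),1}\equiv 0$ as the ground case, an inner induction on the first-row length $m$ (equivalently on $n=m+k+1$), and the smallest shape $(1,1,1)$ checked by hand. The two-row quantities that will appear are supplied in closed form by Theorem~\ref{MainThm} itself.

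For the inductive step I would condition, exactly as in Theorem~\ref{MainThm}, on the last entry $M=T(1,m)$ of the top row. If $M=n$, deleting that box gives a tableau of shape $(m-1,k,1)$ with the same descent set, since $n-1$ is never a descent when $n$ lies in the top row; this contributes $f_{(m-1,k,1),i}(q)$, with the convention $f_{(k-1,k,1),i}=0$ playing the role it did in the two-row recurrence (and note that $M=n$ cannot occur when $m=k$, so the term correctly vanishes there). If $M<n$, the entries $M+1,\dots,n$ fill a horizontal strip in rows two and three and $M$ is forced to be the top of a descent. The genuinely new feature is that I must split according to whether the single box $(3,1)$ lies in this strip: if its entry is $\le M$ the strip lives entirely in row two, contributes no internal descent, and reduces to a three-row shape $(m,\mu_2,1)$ with $i-1$ descents; if its entry exceeds $M$ the box sits in column one, incomparable to the row-two tail, and depending on its value creates one or two descents, reducing to a \emph{two-row} shape $(m,\mu_2)$ with $i-1$ descents, for which Theorem~\ref{MainThm} gives the answer. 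Summing over the cut point $\mu_2$ and recording the $q$-powers of the forced descents produces the recurrence.

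With the recurrence in hand I would substitute the inductive formulas (the three-row formula at $i-1$ and Theorem~\ref{MainThm} at $i-1$), reorganize the sums into the shapes $\sum_j q^{2j}\left[{j \atop a}\right]_q$ and $\sum_j q^{j}\left[{m+j \atop m}\right]_q$ of the previous lemma, apply identities (1) and (2), and then clear the denominators $(1-q^i)(1-q)$ against the surviving $q$-binomials, reducing the claim to a single polynomial identity in $q$ with parameters $m,k,i$ to be confirmed by expansion and cancellation, precisely as at the close of the proof of Theorem~\ref{MainThm}.

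The main obstacle is the descent bookkeeping in the case $M<n$: the length-three first column is exactly what produces the extra factor $(1-q^{i-1})/(1-q)$ and the downward shift $q^{1-i}$ relative to the two-row answer, and getting the two box-in-strip sub-cases and their $q$-weights right is the delicate step; the subsequent algebraic verification is bulkier than in the two-row case but mechanical. An attractive alternative that would sidestep much of this casework is to prove the generating-function relation $f_{(m,k,1),i}(q)=q^{1-i}\left[{i-1 \atop 1}\right]_q f_{(m+1,k+1),i}(q)$ and then invoke Theorem~\ref{MainThm} for the two-row shape $(m+1,k+1)$; I have checked this relation on small shapes, and the difficulty in pursuing it is that the two shapes have different sizes, so a clean combinatorial or recursive justification of the identity is itself the crux.
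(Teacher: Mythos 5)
Your high-level architecture matches the paper's exactly: a removal recurrence, induction on $i$ with the two $q$-binomial summation identities, and a final polynomial verification after clearing denominators; and the relation $f_{(m,k,1),i}=q^{1-i}\left[{i-1 \atop 1}\right]_q f_{(m+1,k+1),i}$ that you mention as an alternative is indeed consistent with both closed forms --- the paper proves precisely its $i=2$ instance by an explicit bijection and uses that as the base case (your grounding at $i=1\equiv 0$ is admissible in principle, but only if the general-$i$ algebraic verification specializes without incident at $i=2$). The genuine gap is in the recurrence itself, which is the crux. Conditioning on the last entry $M=T(1,m)$ of the top row and deleting the entries larger than $M$ leaves a tableau whose first row still has length $m$ and whose largest entry is \emph{forced} to occupy the cell $(1,m)$; for $\mu_2<m$ the shape $(m,\mu_2,1)$ has other corners, so these constrained tableaux form a proper subset of those counted by $f_{(m,\mu_2,1),i-1}$, and your first sub-case overcounts. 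The repair is to strip the descent top $M$ from the end of the first row as well, landing on the unconstrained shape $(m-1,\mu_2,1)$. Your second sub-case has a further problem: when the cell $(3,1)$ holds an entry $c>M$, the removed region carries one descent (at $M$, weight $q^{M}$) if $c=M+1$ but two descents (at $M$ and at $c-1$, weight $q^{M+c-1}$) if $c>M+1$, so it cannot reduce to a single term of the form $q^{\ast}f_{(m,\mu_2),i-1}$; part of it lands in descent number $i-2$ and the weight depends on $c$. You flag this as the delicate step, but the reduction you state (shape $(m,\mu_2)$, $i-1$ descents) is not the correct one, and with it the summation stage has nothing valid to sum.

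The paper avoids both difficulties by conditioning on the position of the largest entry $n=m+k+1$ rather than on $T(1,m)$, and, when $n$ ends the second row, on whether the maximal consecutive run ending at $n$ is preceded by the last entry of the first row or by the third-row entry. Every removed region then carries exactly one descent and every reduced tableau is unconstrained, giving the clean four-term recurrence
\begin{multline*}f_{(m,k,1),i}(q) = f_{(m-1,k,1),i} + q^{m+k} f_{(m,k),i-1} \\ + \sum_{k_0 = 1}^{k-1} q^{m+k_0+1} f_{(m-1,k_0,1),i-1}  + \sum_{k_0 = 1}^{k-1} q^{m+k_0} f_{(m,k_0),i-1},
\end{multline*}
after which the substitution of the inductive formulas and the two summation identities proceeds as you describe. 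So your plan is salvageable, but the case analysis must be redone so that each term removes exactly one descent and reduces to an unrestricted generating function; as written, the recurrence is false.
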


\begin{proof} We again work by recurrence.  The minimum possible number of descents is $i=2$, and for this the theorem's claim is that $$f_{(m,k,1),2}(q) = q^{k+2} \frac{(1-q^{m-k+1})}{(1-q^2)}\left[{k \atop {1}} \right]_q \left[ {{m+1} \atop {1}} \right]_q.$$

One notices that this is precisely $$f_{(m,k,1),2}(q) = q^{-1} f_{(m+1,k+1),2}(q).$$

Thus, we can prove the base case by establishing a bijection between tableaux of shape $(m,k,1)$ with 2 descents and tableaux of shape $(m+1,k+1)$ with 2 descents that increases major index by 1.  This bijection can be described as follows.

There are two possible forms of such a tableaux of shape $(m,k,1)$: either both descent tops are in the first row, or one each are in the first row and the second row.  We illustrate the two possibilities below.  In these diagrams, $i_1$ and $i_2$ are the descent tops; the second row contains a maximal initial segment of consecutive integers from $i_1 + 1$ to $i_1 + j$, where in the latter case $i_2 = i_1 + j$.  Ellipses are runs of consecutive integers uniquely defined by filling out the necessary shape without any other descents.

\begin{center}
\begin{tabular}{cc}
\begin{tikzpicture}
\draw (0.1,-0.25) node {1};
\draw (1.0,-0.25) node {$\cdots$};
\draw (2.6,-0.25) node {$i_1$};
\draw (3.4,-0.25) node {$\cdots$};
\draw (3.9,-0.25) node {$i_2$};
\draw (4.4,-0.25) node {$\cdots$};
\draw (0.1,-0.75) node {$i_1+1$};
\draw (1.0,-0.75) node {$\cdots$};
\draw (1.8,-0.75) node {$i_1+j$};
\draw (2.6,-0.75) node {$\cdots$};
\draw (0.1,-1.25) node {$i_2+1$};
\draw (-0.4,0) -- (4.7,0) -- (4.7,-0.5) -- (-0.4,-0.5) -- (-0.4,-1) -- (2.9,-1) -- (2.9,0);
\draw (-0.4,0) -- (-0.4,-0.5);
\draw (0.7,0) -- (0.7,-1.5) -- (-0.4,-1.5) -- (-0.4,-1);
\draw (1.2,0) -- (1.2,-1);
\draw (2.3,0) -- (2.3,-1);
\draw (3.7,0) -- (3.7,-0.5);
\draw (4.1,0) -- (4.1,-0.5);
\end{tikzpicture}
&
\begin{tikzpicture}
\draw (0.1,-0.25) node {1};
\draw (1.0,-0.25) node {$\cdots$};
\draw (3.3,-0.25) node {$i_1$};
\draw (4.0,-0.25) node {$\cdots$};
\draw (0.1,-0.75) node {$i_1+1$};
\draw (1.0,-0.75) node {$\cdots$};
\draw (2.1,-0.75) node {$i_1+j=i_2$};
\draw (3.3,-0.75) node {$\cdots$};
\draw (0.1,-1.25) node {$i_2+1$};
\draw (0.7,0) -- (0.7,-1.5) -- (-0.4,-1.5) -- (-0.4,0) -- (4.3,0) -- (4.3,-0.5) -- (-0.4,-0.5);
\draw (-0.4,-1) -- (3.6,-1) -- (3.6,0);
\draw (3.0,0) -- (3.0,-1);
\draw (1.25,-0.5) -- (1.25,-1);
\end{tikzpicture}
\end{tabular}
\end{center}

The bijection is as follows:

\begin{enumerate}
\item Move $i_2+1$ from the third row to the second row immediately after $i_1 + j$.
\item Move $i_1+j$ from the second row to the first row immediately after $i_1$.
\item Increase all entries from $i_1+j$ to $m+k+1$ by 1.
\item Insert a new $i_1+j$ at the end of the maximal streak in the second row before the second descent bottom (possibly empty, if $j = 1$ and the tableaux was of the second type).
\end{enumerate}

To reverse the bijection, move the second descent bottom to a new third row.  Remove $i_1+j$ and reduce $i_1+j+1$ and all higher entries by 1.  Move the new $i_1+j$ back to the second row.  This will cause the second row to contain a descent top if $i_2$ had immediately followed $i_1$ in the first row.

This establishes the theorem for the case $i=2$.

We observe the recurrence

\begin{multline*}f_{(m,k,1),i}(q) = f_{(m-1,k,1),i} + q^{m+k} f_{(m,k),i-1} \\ + \sum_{k_0 = 1}^{k-1} q^{m+k_0+1} f_{(m-1,k_0,1),i-1}  + \sum_{k_0 = 1}^{k-1} q^{m+k_0} f_{(m,k_0),i-1}.
\end{multline*}

The four terms arise thus:

\begin{enumerate}
\item $m+k+1$ is the last element of the first row.  Removal gives the first term.
\item $m+k+1$ is the element of the third row. Removal gives the second term.
\item $m+k+1$ is the last element of the second row, and ends a segment of consecutive integers which terminates at a descent bottom.  Removing the segment and the descent top (the last element of the first row) gives the third term.
\item $m+k+1$ is the last element of the second row, and ends a segment of consecutive integers which follows the element in the third row.  Removing the segment and the third row gives the fourth term.
\end{enumerate}

We now let $i \geq 3$ and assume inductively that the theorem has been proved for smaller $i$, and for the same $i$ with smaller $m$ if $m \neq k$ (the first term of the recurrence yields 0 if $m=k$).  Substitute the appropriate formulae and sum.  We need only the previously stated $q$-binomial summation identities.  We obtain a claimed identity of $q$-binomial coefficients that becomes a claimed finite polynomial identity when multiplied through by

$$ \frac{(q)_i (q)_{k-i+1} (q)_{i-1} (q)_{n-i+2} (1-q) (1-q^{i-1})(1-q^i)}{q^{k+i^2-4i+6}(q)_{k-1}(q)_n}.$$

A short calculation, made easier by a symbolic computation package, verifies the identity and the theorem is proved.
\end{proof}

For general partitions into three parts, a more general formula for two-rowed skew partitions becomes useful. The author is presently pursuing this line of investigation.  However, the recurrence method is limited in its scope of application.  To begin with, a conjectured form is necessary, and for partitions $(m,k,c)$ with $c>1$, the generating function is \emph{not} always a product of this type. In those cases it is likely a sum over binomial coefficients which would have to be determined. In \cite{EC2} the $q$-hooklength formula above is proved by appeal to the theory of symmetric functions; this may be a fruitful route to conjecture and prove a general form.

\section{Acknowledgements}

A portion of this material was presented at the CANT 2017 conference at the CUNY Graduate Center in May 2017.  The author thanks organizer Melvyn Nathanson for the opportunity to speak and the interesting sessions.

\end{document}